\documentclass[a4paper,11pt,oneside,reqno]{amsart}
\usepackage{amsthm,amsmath,amssymb}
\usepackage{hyperref}
\usepackage{orcidlink}
\usepackage{graphicx}
\usepackage{caption}
\newcommand{\doi}[1]{\href{https://doi.org/#1}{doi:#1}}
\usepackage{tikz}

\theoremstyle{definition}
\newtheorem{definition}{Definition}
\newtheorem{example}[definition]{Example}
\theoremstyle{remark}
\newtheorem{remark}[definition]{Remark}
\newtheorem{openproblem}[definition]{Open problem}
\theoremstyle{plain}
\newtheorem{theorem}[definition]{Theorem}
\newtheorem{lemma}[definition]{Lemma}

\newcommand{\Z}{\mathbb{Z}}
\newcommand{\N}{\mathbb{N}}
\newcommand{\R}{\mathbb{R}}
\newcommand{\RP}{\mathbb{RP}^1}

\begin{document}

\title[Lattice-line directions]{On the directions occurring in lattice-line coverings of the integer plane}
\author{Jan Snellman \orcidlink{0009-0002-6676-5068}}
\address{Matematiska Institutionen, Linköpings Universitet, 581 83 Linköping, Sweden}
\email{jan.snellman@liu.se}
\makeatletter
\let\addresses\@empty
\makeatother
\maketitle

\begin{center}
  \small
  Matematiska Institutionen, Linköpings Universitet, 581\,83 Linköping, Sweden\\
  \texttt{jan.snellman@liu.se}
\end{center}

\begin{abstract}
  We consider families of lines that cover every point of the integer lattice \(\Z^2\), subject to
  the constraint that no two lines of different direction in the family meet at a lattice point.
  Restricting to \emph{lattice lines} (lines containing at least two, hence infinitely many,
  lattice points, equivalently of rational direction), we show that the set of directions
  occurring in such a covering can be made dense in the space of line directions. The construction
  is a recursive splitting of \(\Z^2\) into nested rank-2 sublattice cosets, each handed off to a
  freshly chosen direction; the key technical point is a steering lemma showing that at every
  stage of the recursion a new direction arbitrarily close to any prescribed target can still be
  realized, via an elementary sieve bound.
\end{abstract}

\begin{center}
\footnotesize
\emph{Note.} An earlier version of this work appeared as the technical report
\cite{report2026}. The present manuscript extends it; see \texttt{changes-wrt-report.md} in the
source repository \cite{gitrepo} for a running account of what has been added or altered since.
\end{center}

\section{Introduction}

Consider the integer lattice \(\Z^2\subset\R^2\). We wish to cover every point of \(\Z^2\) by a family
\(\mathcal F\) of lines, subject to: any two lines of \(\mathcal F\) may cross, but never at a point of
\(\Z^2\). Which sets of line-directions can occur across such a family?

\begin{remark}[The lattice-line hypothesis]
  \label{rem:hypothesis}
  As posed, with no further restriction on the lines allowed, the question is trivial: for each
  \(z\in\Z^2\) take the line \(L_z\) through \(z\) of irrational slope \(\alpha_z\). Then
  \begin{equation}
    \label{eq:irrational-line}
    L_z\cap\Z^2=\{z\}
  \end{equation}
  (a second lattice point on \(L_z\) would force \(\alpha_z\in\mathbb Q\)), so distinct such lines
  never share a lattice point, and choosing the \(\alpha_z\) to range over a dense (indeed, all) set of
  irrational directions already realizes an uncountable dense set of directions. Everything of
  interest in this manuscript therefore concerns the restriction to \emph{lattice lines}: lines
  containing at least two (hence, by the same argument, infinitely many) points of \(\Z^2\),
  equivalently lines of rational direction. This restriction is in force throughout.
\end{remark}

We now formalize the objects of study as explicit definitions.

\begin{definition}[Primitive direction]
  \label{def:direction}
  A \emph{primitive direction} is a pair \(d=(p,q)\in\Z^2\) with \(\gcd(p,q)=1\). Two primitive
  directions \(d,d'\) are \emph{equal as directions} if \(d'=\pm d\). Write
  \begin{equation}
    \label{eq:phi-def}
    \varphi_d(x,y) = qx-py,
  \end{equation}
  a surjective homomorphism \(\Z^2\to\Z\) depending only on \(d\) up to sign of the whole map (in
  particular \(\ker\varphi_d=\ker\varphi_{-d}\)). The space of directions is \(\RP=S^1/\{\pm1\}\),
  parametrized by \(\theta\in[0,\pi)\); we metrize it by
  \begin{equation}
    \label{eq:RP-metric}
    d(\theta,\theta') = \min(|\theta-\theta'|,\,\pi-|\theta-\theta'|),
  \end{equation}
  so ``within \(\varepsilon\) of \(\theta\)'' always means within \(\varepsilon\) in this metric. Since
  \([0,\pi)\to\RP\) is a continuous surjection, a set dense in \([0,\pi)\) is automatically dense in
  \(\RP\).
\end{definition}

\begin{definition}[Lattice lines]
  \label{def:lline}
  For a primitive direction \(d=(p,q)\) and \(c\in\Z\), the \emph{lattice line} \(\ell_{d,c}\) is the
  level set
  \begin{equation}
    \label{eq:lline-def}
    \ell_{d,c} \;=\; \varphi_d^{-1}(c) \;=\; \{(x,y)\in\Z^2 : qx-py=c\}.
  \end{equation}
  Every line in \(\R^2\) containing at least two points of \(\Z^2\) equals \(\ell_{d,c}\) for a unique
  primitive direction \(d\) (up to sign) and a unique \(c\in\Z\); we call such a line a lattice line of
  direction \(d\).
\end{definition}

\begin{definition}[Covering family, valid family]
  \label{def:family}
  A \emph{covering family} is a set \(\mathcal F\) of lattice lines with \(\bigcup\mathcal F\supseteq\Z^2\). A
  covering family \(\mathcal F\) is \emph{valid} if no two lines of \(\mathcal F\) of different
  direction share a point of \(\Z^2\); equivalently, for every pair of distinct primitive directions
  \(d,d'\) occurring in \(\mathcal F\), and every \(\ell_{d,c},\ell_{d',c'}\in\mathcal F\), we have
  \(\ell_{d,c}\cap\ell_{d',c'}\cap\Z^2=\emptyset\).
\end{definition}

\begin{figure}[h]
  \centering
  \begin{tikzpicture}[scale=0.9]
    \foreach \x in {-1,...,4} \draw[gray!40,thin] (\x,-1) -- (\x,4);
    \foreach \y in {-1,...,4} \draw[gray!40,thin] (-1,\y) -- (4,\y);
    \foreach \x in {-1,...,4} \foreach \y in {-1,...,4} \fill[black] (\x,\y) circle (0.10);

    \definecolor{lineAcolor}{RGB}{0,90,181}
    \definecolor{lineBcolor}{RGB}{190,30,30}

    \draw[lineAcolor, line width=0.85pt] (-0.6,0.7) -- (3.6,2.8);
    \foreach \p in {(0,1),(2,2),(4,3)} \fill[lineAcolor] \p circle (2.4pt);

    \draw[lineBcolor, line width=0.85pt] (0.2,4.6) -- (3.3,-1.6);
    \foreach \p in {(1,3),(2,1)} \fill[lineBcolor] \p circle (2.4pt);

    \fill[black, rotate around={45:(1.6,1.8)}] (1.6,1.8) ++(-2.6pt,-2.6pt) rectangle ++(5.2pt,5.2pt);
    \node[below right, font=\scriptsize] at (1.6,1.8) {\(\ell_{d,c}\cap\ell_{d',c'}\notin\Z^2\)};
  \end{tikzpicture}
  \caption{Two lattice lines of different directions \(d,d'\) may cross \emph{off} the lattice
  (diamond marker) without violating validity (Definition~\ref{def:family}): only crossings
  \emph{at} points of \(\Z^2\) (dots) are forbidden between distinct directions.}
  \label{fig:offlattice-crossing}
\end{figure}

\begin{lemma}[The everywhere-disjoint variant is trivial]
  \label{lem:everywhere-disjoint-trivial}
  If the lines of \(\mathcal F\) are required to be pairwise disjoint everywhere (not merely off the
  lattice), then every line in \(\mathcal F\) shares a single direction, and any one direction
  already suffices (Example~\ref{ex:single-direction} below).
\end{lemma}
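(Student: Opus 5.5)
The lemma has two parts. Under the stronger hypothesis that lines of \(\mathcal F\) are pairwise disjoint as subsets of \(\R^2\), we must show only one direction can occur. We must also show that a single direction always yields a valid covering. I would treat each part separately, using only elementary plane geometry and the level-set description of Definition~\ref{def:lline}.

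\emph{Single direction forced.} Suppose \(\ell_{d,c}\) and \(\ell_{d',c'}\) lie in \(\mathcal F\) with \(d\neq\pm d'\). As real lines, \(\{qx-py=c\}\) and \(\{q'x-p'y=c'\}\) have normal vectors \((q,-p)\) and \((q',-p')\). These are non-proportional: both are primitive, so proportionality would force \(d'=\pm d\). The \(2\times 2\) linear system
\begin{equation*}
\begin{pmatrix} q & -p\\ q' & -p'\end{pmatrix}\begin{pmatrix}x\\y\end{pmatrix}=\begin{pmatrix}c\\c'\end{pmatrix}
\end{equation*}
therefore has nonzero determinant \(p q'-p' q\), and hence a unique real solution. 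The two lines thus meet in \(\R^2\), contradicting everywhere-disjointness. So all lines of \(\mathcal F\) share one direction. This is just the fact that non-parallel lines in the plane intersect. The only care needed is checking that ``different primitive direction up to sign'' really means non-parallel, and that is exactly where primitivity is used.

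\emph{One direction suffices.} Fix any primitive \(d=(p,q)\) and take \(\mathcal F_d=\{\ell_{d,c}:c\in\Z\}\). By Definition~\ref{def:direction}, \(\varphi_d\) is a map \(\Z^2\to\Z\), so every \(z\in\Z^2\) lies on \(\ell_{d,\varphi_d(z)}\). Hence \(\mathcal F_d\) is a covering family. Distinct level sets of a function are disjoint, and indeed the real lines \(qx-py=c\) for different \(c\) are parallel and distinct, so they are disjoint everywhere. Validity in the sense of Definition~\ref{def:family} then holds vacuously, since only one direction occurs. This is presumably the content of Example~\ref{ex:single-direction}, which I would cite.

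I expect no real obstacle. The only subtle point is the bookkeeping in the first part: the disjointness hypothesis concerns the real lines, not their lattice traces \(\ell_{d,c}\subseteq\Z^2\). The argument must therefore pass to the real lines containing those traces, and Definition~\ref{def:lline} makes these unique.
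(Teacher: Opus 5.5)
Your proposal is correct and follows essentially the same route as the paper, which likewise argues only that two distinct lines in \(\R^2\) are parallel or meet in exactly one point, and defers the sufficiency of a single direction to Example~\ref{ex:single-direction}. Your determinant computation (which is the \(\Delta\) of Lemma~\ref{lem:rigidity}), your use of primitivity to show that ``different direction'' means ``non-parallel'', and your passage from lattice traces to real lines fill in details the paper leaves implicit.
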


\begin{proof}
  Two distinct lines in \(\R^2\) are either parallel or meet at exactly one point, so pairwise
  disjointness forces every line in the family to share a single direction.
\end{proof}

All the content is in the ``crossings allowed off the lattice'' variant of
Definition~\ref{def:family}, which is our subject here.

\begin{example}
  \label{ex:single-direction}
  For a primitive direction \(d=(p,q)\), the lines \(\ell_{d,c}\), \(c\in\Z\), partition \(\Z^2\) (they are
  the fibers of the surjective homomorphism \(\varphi_d\)):
  \begin{equation}
    \label{eq:single-direction-partition}
    \Z^2 \;=\; \bigsqcup_{c\in\Z} \ell_{d,c}.
  \end{equation}
  So a single direction always suffices to cover \(\Z^2\) by pairwise-disjoint (in particular
  lattice-point-disjoint) lattice lines.
\end{example}

Our main result:

\begin{theorem}
  \label{thm:main}
  There is a valid covering family \(\mathcal F\) (Definition~\ref{def:family}) whose set of realized
  directions is dense in \(\RP\).
\end{theorem}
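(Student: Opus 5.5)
The plan is to build \(\mathcal F\) by a recursion on nested cosets
\[
\Z^2=C_0\supset C_1\supset C_2\supset\cdots,\qquad C_k=a_k+\Lambda_k,
\]
where \(\Lambda_k\) is a full-rank sublattice. At stage \(k\) we pick a direction \(d_k\) and cover \(C_{k-1}\setminus C_k\) by lattice lines of direction \(d_k\). We fix in advance an enumeration \((\theta_k)\) of a dense subset of \([0,\pi)\) and tolerances \(\varepsilon_k\to0\), and we demand that \(d_k\) lies within \(\varepsilon_k\) of \(\theta_k\). Then every ball in \(\RP\) contains some \(\theta_k\) with \(\varepsilon_k\) small, hence contains \(d_k\), so the realized directions are dense.

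The structural invariant is that \(d_k\in\Lambda_{k-1}\) and \(d_k\) is primitive in \(\Z^2\). Since \(d_k\) is primitive, the lattice points of any line \(\ell_{d_k,c}\) are exactly \(z+\Z d_k\) for one point \(z\) on it. Because \(d_k\in\Lambda_{k-1}\), every such line through a point of \(C_{k-1}\) has all its lattice points inside \(C_{k-1}\). So \(C_{k-1}\) is a disjoint union of \(d_k\)-lines, the fibers of \(\varphi_{d_k}\) restricted to \(C_{k-1}\).

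Since \(\Lambda_{k-1}\) contains the primitive vector \(d_k\), we can write \(\Lambda_{k-1}=\Z d_k\oplus\Z w\) for some \(w\). The \(d_k\)-lines in \(C_{k-1}\) are then indexed by \(j\in\Z\) via \(a_{k-1}+jw+\Z d_k\). We choose a modulus \(m_k\) and a residue \(r_k\), and set
\[
C_k=a_{k-1}+r_kw+\Lambda_k,\qquad \Lambda_k=\Z d_k\oplus m_k\Z w .
\]
We put into \(\mathcal F\) exactly the \(d_k\)-lines with \(j\not\equiv r_k\pmod{m_k}\). Note that \(\Lambda_k\) again contains the primitive vector \(d_k\), which keeps the recursion going.

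Validity then follows from the nesting. Every lattice point of a stage-\(k\) line lies in \(C_{k-1}\setminus C_k\), and these sets are pairwise disjoint for different \(k\). Hence lines of different directions, which come from different stages, never share a point of \(\Z^2\). We also keep the \(d_k\) pairwise distinct; this costs nothing, since we may exclude finitely many directions at each step.

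For covering, we need \(\bigcap_k C_k=\emptyset\). Since the \(C_k\) are nested, it suffices to force every point with \(\|z\|\le k\) out of \(C_k\). The finitely many such points of \(C_{k-1}\) lie on finitely many indices \(j\). Taking \(m_k\) larger than their spread lets us choose \(r_k\) avoiding all of them modulo \(m_k\).

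The main obstacle is the steering step: given \(\Lambda=\Z d\oplus M\Z w\) (with \(d=d_{k-1}\), \(M=m_{k-1}\)), a target \(\theta\) and \(\varepsilon>0\), we must find \(d'\in\Lambda\) that is primitive in \(\Z^2\) and within \(\varepsilon\) of \(\theta\). Multiples of a primitive vector are useless here: if only \(Nu\in\Lambda\) with \(N>1\), the \(u\)-line through a point of \(C_k\) passes through lattice points outside \(C_k\) that were already covered.

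Complete \(d\) to a \(\Z^2\)-basis \((d,e)\). Then \(\Lambda\) has the form \(\{ad+bMe' \}\) with \(e'\) another basis complement of \(d\), so that \(ad+bMe'\) is primitive iff \(\gcd(a,bM)=1\). If \(\theta\) is the direction of \(d\) itself we just perturb the target, so assume \(\theta\) has a nonzero \(e'\)-component. Fix \(b\) large. Then the set of \(a\) for which \(ad+bMe'\) lies in the \(\varepsilon\)-sector around \(\theta\) contains an interval of length growing linearly in \(b\). It therefore suffices that every interval of length \(L\) contains an integer coprime to \(bM\).

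I would settle this by an elementary sieve (a Jacobsthal-type bound). Inclusion–exclusion over the prime divisors of \(bM\) gives
\[
\#\{a\in I:\gcd(a,bM)=1\}\ge L\,\frac{\varphi(bM)}{bM}-2^{\omega(bM)}.
\]
A cleaner route avoids \(\omega(b)\): choose \(b\) prime and larger than \(M\), so that \(\omega(bM)=\omega(M)+1\) is fixed while \(L\asymp b\) grows. The error term is then bounded and the main term dominates. Choosing \(b\) large also lets us avoid the finitely many previously used directions.
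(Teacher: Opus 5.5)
Your proposal is correct and is essentially the paper's proof, written in a basis adapted to the previous direction rather than with the axis-aligned cosets $R(n_1,n_2,x_0,y_0)$: the paper's $(P,Q)=(n_1s,n_2t)$ is likewise a vector of the current lattice that is primitive in $\Z^2$, its $|s||t|$ sub-cosets are exactly your classes of line indices $j$ with the modulus forced to be $m_k=|st|$, and its Steering lemma is your sieve (a large prime $s=p$, and a window of $t$ coprime to $n_1p$).

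One slip to repair: for a $\Z^2$-basis $(d,e')$ one has $\Lambda_{k-1}=\Z d\oplus M\Z e'$ with $M=[\Z^2:\Lambda_{k-1}]=m_1\cdots m_{k-1}$ (the image of $\Lambda_{k-1}\supseteq\Z d$ in $\Z^2/\Z d\cong\Z$ is $M\Z$), not with $M=m_{k-1}$, since your $w$ need not be a $\Z^2$-complement of $d$ --- with this $M$ the steering argument goes through verbatim, and similarly you should take $m_k$ larger than the \emph{number} of indices $j$ to be avoided (not their spread) and at least $2$, so that a free residue exists and $d_k$ is actually realized.
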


\begin{figure}[h]
  \centering
  \includegraphics[width=0.55\textwidth]{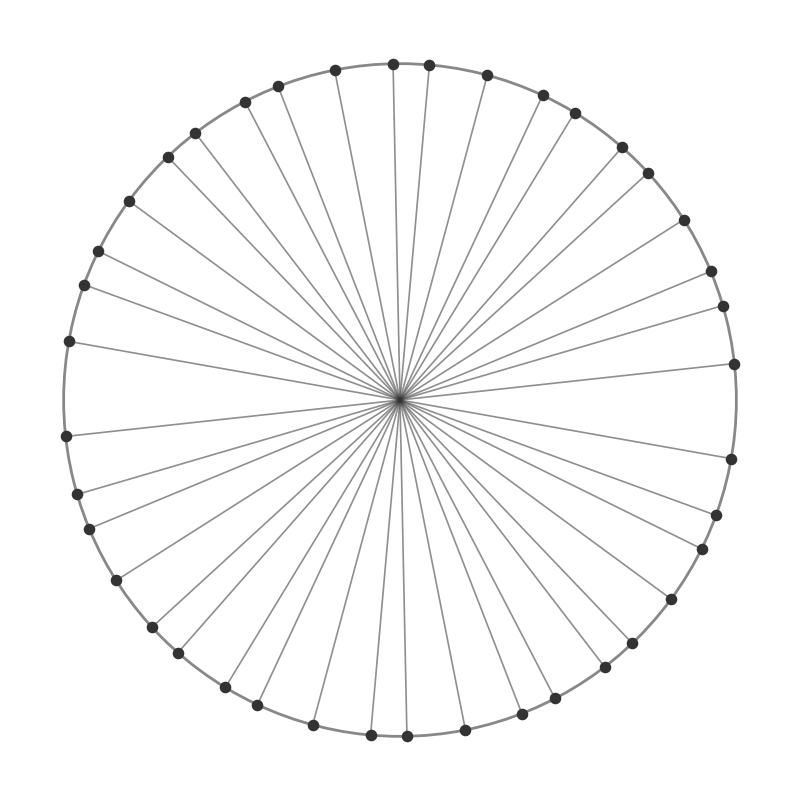}
  \caption{A preview of Theorem~\ref{thm:main}: the first \(100\) directions realized by the
  construction of Section~\ref{sec:construction} below, each drawn as a diameter of the unit
  circle. See Figure~\ref{fig:directions} for the same data with the construction's progress
  (order and consecutive jumps) made visible.}
  \label{fig:directions-preview}
\end{figure}

\section{Two lemmas on lattice lines}

\begin{lemma}[Rigidity]
  \label{lem:rigidity}
  Let \(d_1=(p_1,q_1)\), \(d_2=(p_2,q_2)\) be \emph{two} distinct primitive directions, and set
  \begin{equation}
    \label{eq:Delta-def}
    \Delta \;=\; p_1q_2-p_2q_1 \;\ne\; 0.
  \end{equation}
  The lines \(\ell_{d_1,c_1}\) and \(\ell_{d_2,c_2}\) meet at a lattice point if and only if
  \begin{equation}
    \label{eq:rigidity-conclusion}
    \tag{R}
    \Delta \mid (p_1c_2-p_2c_1) \qquad\text{and}\qquad \Delta \mid (q_1c_2-q_2c_1).
  \end{equation}
  In particular, if \(|\Delta|=1\), condition~\eqref{eq:rigidity-conclusion} holds vacuously for
  \emph{every} choice of \(c_1,c_2\), so \(d_1,d_2\) can never both occur in a valid family
  (Definition~\ref{def:family}).
\end{lemma}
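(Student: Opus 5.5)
The plan is to solve the intersection directly over \(\mathbb Q\) and then ask when the unique solution is integral. The two lines are the solution sets of the linear system
\begin{equation*}
  q_1x-p_1y=c_1,\qquad q_2x-p_2y=c_2 .
\end{equation*}
Its coefficient matrix \(\begin{pmatrix} q_1 & -p_1\\ q_2 & -p_2\end{pmatrix}\) has determinant \(-q_1p_2+p_1q_2=\Delta\ne0\). Hence over \(\R\) the lines meet in exactly one point, and Cramer's rule gives it explicitly:
\begin{equation*}
  x=\frac{-c_1p_2+p_1c_2}{\Delta}=\frac{p_1c_2-p_2c_1}{\Delta},\qquad
  y=\frac{q_1c_2-q_2c_1}{\Delta}.
\end{equation*}
I will verify the signs by substituting back; for example, \(q_1x-p_1y=(q_1p_1c_2-q_1p_2c_1-p_1q_1c_2+p_1q_2c_1)/\Delta=c_1\).

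Since the real intersection point is unique, the lines meet at a lattice point if and only if this point lies in \(\Z^2\). That happens exactly when both numerators are divisible by \(\Delta\), which is condition~\eqref{eq:rigidity-conclusion}. Nothing deeper is needed, because uniqueness of the intersection removes any question of choosing among several candidate points.

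For the ``in particular'' clause, note that \(|\Delta|=1\) divides every integer, so \eqref{eq:rigidity-conclusion} holds for all \(c_1,c_2\). Now suppose \(d_1\) and \(d_2\) both occur in a valid family \(\mathcal F\). Pick any lines \(\ell_{d_1,c_1},\ell_{d_2,c_2}\in\mathcal F\). By the first part they meet at a lattice point, and since the directions are distinct this contradicts Definition~\ref{def:family}.

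The only point needing care is the hypothesis that \(d_1,d_2\) are distinct \emph{as directions}, meaning \(d_2\ne\pm d_1\). This is what guarantees \(\Delta\ne0\). Indeed, \(\Delta=0\) means \((p_1,q_1)\) and \((p_2,q_2)\) are proportional, and for primitive vectors proportionality forces \(d_2=\pm d_1\).
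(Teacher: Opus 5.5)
Your proof is correct and follows essentially the same route as the paper, which also applies Cramer's rule to the \(2\times2\) system. The paper writes this as eliminating \(y\) and \(x\) to get \(\Delta x=p_1c_2-p_2c_1\) and \(\Delta y=q_1c_2-q_2c_1\), and then observes that integrality of the unique solution is exactly \eqref{eq:rigidity-conclusion}. Your two additions are both correct small supplements to the paper's argument: you spell out the contradiction with validity when \(|\Delta|=1\), and you note that \(\Delta\ne0\) follows from \(d_2\ne\pm d_1\) for primitive vectors.
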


\begin{proof}
  A lattice point \((x,y)\) lies on both lines exactly when the linear system
  \begin{equation}
    \label{eq:rigidity-system}
    \begin{aligned}
      q_1x - p_1y &= c_1, \\
      q_2x - p_2y &= c_2
    \end{aligned}
  \end{equation}
  holds. Eliminating \(y\) gives
  \begin{equation}
    \label{eq:rigidity-elim-x}
    \Delta\, x \;=\; p_1c_2 - p_2c_1,
  \end{equation}
  and eliminating \(x\) symmetrically gives
  \begin{equation}
    \label{eq:rigidity-elim-y}
    \Delta\, y \;=\; q_1c_2 - q_2c_1.
  \end{equation}
  Since \(\Delta\ne0\), equations~\eqref{eq:rigidity-elim-x}--\eqref{eq:rigidity-elim-y} determine
  \(x,y\in\mathbb Q\) uniquely (this is exactly Cramer's rule applied to~\eqref{eq:rigidity-system}),
  and \((x,y)\in\Z^2\) if and only if \(\Delta\) divides both right-hand sides, which is precisely
  \eqref{eq:rigidity-conclusion}. Conversely, if~\eqref{eq:rigidity-conclusion} holds, the
  quotients in~\eqref{eq:rigidity-elim-x}--\eqref{eq:rigidity-elim-y} are integers and one checks
  directly, by substituting back into~\eqref{eq:rigidity-system}, that they solve the system. If
  \(|\Delta|=1\), both
  divisibility conditions in~\eqref{eq:rigidity-conclusion} hold automatically for every
  \(c_1,c_2\in\Z\).
\end{proof}

\begin{lemma}[Coset]
  \label{lem:coset}
  Let \(p,q\) be coprime integers, \(x_0,y_0\in\Z\), and set
  \begin{equation}
    \label{eq:c0-def}
    c_0 = qx_0-py_0.
  \end{equation}
  For \(k\in\Z\), write \(r_k=c_0+pqk\). Then
  \begin{equation}
    \label{eq:coset-conclusion}
    \bigcup_{k\in\Z} \ell_{(p,q),\,r_k} \;=\; \{(x,y)\in\Z^2 : x\equiv x_0\!\!\pmod{|p|},\
    y\equiv y_0\!\!\pmod{|q|}\}.
  \end{equation}
\end{lemma}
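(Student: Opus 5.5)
The plan is to prove the two inclusions of \eqref{eq:coset-conclusion} separately. Both reduce to one elementary congruence fact: for \((x,y)\in\Z^2\), the value \(\varphi_{(p,q)}(x,y)=qx-py\) is congruent to \(c_0\) modulo \(pq\) if and only if \(x\equiv x_0\pmod{|p|}\) and \(y\equiv y_0\pmod{|q|}\). Once this is in hand, the statement follows at once. The left-hand side of \eqref{eq:coset-conclusion} is exactly the set of lattice points \((x,y)\) with \(qx-py\in\{c_0+pqk:k\in\Z\}\), that is, with \(qx-py\equiv c_0\pmod{pq}\), by Definition~\ref{def:lline}. Degenerate cases such as \(p=0\) (forcing \(q=\pm1\)) need a brief separate remark. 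Since \(\pmod 1\) is vacuous and \(\pmod 0\) means equality, the fact should still hold there; I would check this directly.

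First I would handle the inclusion \(\supseteq\). Suppose \(x=x_0+|p|a\) and \(y=y_0+|q|b\). Then
\[
qx-py = c_0 + q|p|a - p|q|b,
\]
and both correction terms are multiples of \(pq\). Hence \(qx-py=r_k\) for some integer \(k\), so \((x,y)\) lies on \(\ell_{(p,q),r_k}\).

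For the inclusion \(\subseteq\), suppose \(qx-py=c_0+pqk\). Write \(u=x-x_0\) and \(v=y-y_0\); then \(qu-pv=pqk\). Reducing modulo \(p\) gives \(p\mid qu\), and since \(\gcd(p,q)=1\), this gives \(p\mid u\). Reducing modulo \(q\) symmetrically gives \(q\mid v\). These are exactly the required congruences.

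The only step needing care is this use of coprimality, i.e.\ Euclid's lemma, together with the degenerate cases \(p=0\) or \(q=0\). There the moduli become \(0\) and \(1\), and the statement says that the union over \(k\) collapses to a single line. For example, with \((p,q)=(0,1)\) all \(r_k=c_0=x_0\), and the right-hand side is \(\{x=x_0\}\), which agrees. I expect no real obstacle beyond stating these edge cases cleanly.
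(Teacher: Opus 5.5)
Your proof is correct, and on one point it is more complete than the paper's. The inclusion of the coset \(C\) into the union is the same computation in both. The paper writes \(C=\{(x_0+pu,\,y_0+qv)\}\) and finds \(\varphi_{(p,q)}=c_0+pq(u-v)\), which matches your \(c_0+q|p|a-p|q|b\).

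The routes part on the reverse inclusion. You reduce \(qu-pv=pqk\) modulo \(p\) and \(q\) and apply Euclid's lemma. That is exactly where \(\gcd(p,q)=1\) is needed: for the non-coprime pair \((2,2)\), the point \((1,1)\) satisfies \(2x-2y=0\) yet is not congruent to \((0,0)\) modulo \(2\).

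The paper instead identifies \(\ell_{(p,q),r_k}\cap C\) from its parametrization. It then asserts that every point of \(\ell_{(p,q),r_k}\) ``manifestly'' lies in \(C\), but its parenthetical only re-exhibits points of \(C\) on the line. To close that step one needs either your Euclid argument, or the fact that the lattice points of a line of primitive direction \((p,q)\) form a single orbit under translation by \((p,q)\), together with stability of \(C\) under such translations. In exchange, the paper's route gives as a by-product the explicit form \(\ell_{(p,q),r_k}\cap C=\{(x_0+pu,\,y_0+q(u-k)):u\in\Z\}\) of each line.

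Your handling of the degenerate case is also the right one. The paper's remark that coprimality forces \(p,q\ne0\) is false for \((0,\pm1)\) and \((\pm1,0)\). Your check, reading congruence modulo \(0\) as equality, shows the statement still holds there. In the paper's only application, to the nonzero pair \((s,t)\) inside Lemma~\ref{lem:splitting-coset}, the case never arises.
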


\begin{proof}
  Write \(C\) for the right-hand side of~\eqref{eq:coset-conclusion}. Since \(\gcd(p,q)=1\) implies in
  particular \(p,q\ne0\), congruence modulo \(p\) (resp.\ \(q\)) is well defined, and \(C\) admits the
  parametrization
  \begin{equation}
    \label{eq:coset-param}
    C = \{(x_0+pu,\,y_0+qv) : u,v\in\Z\}.
  \end{equation}
  For a point of the form~\eqref{eq:coset-param},
  \begin{equation}
    \label{eq:coset-phi-eval}
    \varphi_{(p,q)}(x_0+pu,\,y_0+qv) \;=\; q(x_0+pu)-p(y_0+qv) \;=\; c_0 + pq(u-v),
  \end{equation}
  using~\eqref{eq:c0-def} and cancelling the \(pqu\) terms. By~\eqref{eq:lline-def}, this point lies
  on \(\ell_{(p,q),r_k}\) if and only if \(u-v=k\). As \(u\) ranges over \(\Z\) (with \(v=u-k\) then
  determined), every point of \(C\) with that particular value of \(u-v\) is visited exactly once, and
  \eqref{eq:coset-phi-eval} shows every point of \(\ell_{(p,q),r_k}\cap C\) has \(u-v=k\): hence
  \begin{equation}
    \label{eq:coset-single-level}
    \ell_{(p,q),\,r_k} \cap C \;=\; \{(x_0+pu,\,y_0+q(u-k)) : u\in\Z\},
  \end{equation}
  and moreover every point of \(\ell_{(p,q),r_k}\) manifestly lies in \(C\) (take \(u,v\) with
  \(u-v=k\) so that~\eqref{eq:coset-phi-eval} matches \(r_k\); such \((x,y)\) automatically satisfies
  \(x\equiv x_0\pmod p\), \(y\equiv y_0\pmod q\)). So \(\ell_{(p,q),r_k}\subseteq C\) for every \(k\),
  with equality of the union to all of \(C\) once \(k\) ranges over \(\Z\), since every pair \((u,v)\in\Z^2\)
  has \emph{some} value of \(u-v\).
\end{proof}

\begin{figure}[htbp]
  \centering
  \begin{tikzpicture}[scale=0.66]
    \begin{scope}
      \clip (-3.2,-2.9) rectangle (8.2,6.4);
      \foreach \x in {-3,...,8} \foreach \y in {-3,...,7}
        \fill[gray!40] (\x,\y) circle (1.5pt);
      \draw[gray!55] (-3.2,0) -- (8.2,0);
      \draw[gray!55] (0,-2.9) -- (0,6.4);
      \foreach \r in {-19,-13,-7,-1,5,11,17}
        \draw[blue!65!black, line width=0.7pt] (-8,{(2*(-8)-\r)/3}) -- (14,{(2*14-\r)/3});
      \foreach \x in {-2,1,4,7} \foreach \y in {-1,1,3,5}
        \fill[blue!65!black] (\x,\y) circle (2.9pt);
      \draw[->, red!70!black, line width=1pt] (1,1) -- (4,3);
    \end{scope}
    \node[red!70!black, font=\scriptsize, below right] at (2.3,1.75) {$(p,q)=(3,2)$};
    \node[blue!65!black, font=\scriptsize, right] at (8.3,6.0) {$r_0=-1$};
    \node[blue!65!black, font=\scriptsize, right] at (8.3,4.0) {$r_1=5$};
    \node[blue!65!black, font=\scriptsize, right] at (8.3,2.0) {$r_2=11$};
  \end{tikzpicture}
  \caption{Lemma~\ref{lem:coset} for \(d=(p,q)=(3,2)\) and \((x_0,y_0)=(1,1)\), so that
    \(c_0=qx_0-py_0=-1\) and \(r_k=-1+6k\). Small grey dots are \(\Z^2\); large dots are the coset
    \(C=\{(x,y): x\equiv 1 \bmod 3,\ y\equiv 1 \bmod 2\}\), a translate of the sublattice
    \(3\Z\times2\Z\). Each line \(\ell_{(3,2),r_k}\) meets \(\Z^2\) \emph{only} in \(C\), and
    consecutive points along a line differ by \((p,q)=(3,2)\); as \(k\) ranges over \(\Z\) the
    lines partition \(C\). The levels are equally spaced with gap \(pq=6\): a smaller gap would
    give lines missing \(\Z^2\) altogether, a larger one would leave points of \(C\) uncovered.}
  \label{fig:coset}
\end{figure}

Recall that by \emph{Farey neighbors} is meant two primitive directions
\[d_1=(p_1,q_1), \quad d_2=(p_2,q_2)\]
(equivalently, fractions \(p_1/q_1\), \(p_2/q_2\) in lowest
terms) with
\[|p_1q_2-p_2q_1|=1.\]
This is exactly the \(|\Delta|=1\) case of
Lemma~\ref{lem:rigidity}; see \cite[Ch.~III]{HardyWright2008} for the classical theory.
The rigidity lemma shows that a family realizing many directions cannot simply throw them
together: directions related by a determinant of \(\pm1\) (``unimodular pairs'', e.g.\ any pair of
Farey neighbors read as slopes) are permanently incompatible. The coset lemma is the tool that lets
us build a family avoiding all bad pairs at once: applied recursively below, it splits the current
sub-coset into finer sub-cosets, one of which is handed off to each freshly chosen direction.

\section{The construction}
\label{sec:construction}

\begin{definition}[Rank-2 coset]
  \label{def:Rcoset}
  For coprime positive integers \(n_1,n_2\) and \(x_0,y_0\in\Z\), define
  \begin{equation}
    \label{eq:Rcoset-def}
    R(n_1,n_2,x_0,y_0) \;=\; \{(x,y)\in\Z^2 : x\equiv x_0\!\!\pmod{n_1},\ y\equiv y_0\!\!\pmod{n_2}\}.
  \end{equation}
  Since \(n_1,n_2>0\), every \((x,y)\in R(n_1,n_2,x_0,y_0)\) is of the form \(x=x_0+n_1u\), \(y=y_0+n_2w\)
  for a \emph{unique} pair \((u,w)\in\Z^2\); call \((u,w)\) the \emph{internal coordinates} of the
  point.
\end{definition}

We state the two halves of the original Splitting lemma as two separate lemmas: the first
identifies each finer sub-coset with a plain coset of a new direction, and the second records that
these sub-cosets genuinely partition \(R\).

\begin{lemma}[Splitting: coset identity]
  \label{lem:splitting-coset}
  Let \(R=R(n_1,n_2,x_0,y_0)\), let \(s,t\) be nonzero integers with
  \begin{equation}
    \label{eq:splitting-hyps}
    \tag{H}
    \gcd(s,n_2)=\gcd(t,n_1)=\gcd(s,t)=1,
  \end{equation}
  and put
  \begin{equation}
    \label{eq:PQ-def}
    P=n_1s, \qquad Q=n_2t.
  \end{equation}
  For \(u_0,w_0\in\Z\), let
  \begin{equation}
    \label{eq:Rsub-def}
    \begin{aligned}
      R_{u_0,w_0} \;=\; \{(x,y)\in R :\ & x=x_0+n_1u,\ y=y_0+n_2w \text{ for some } \\
      & u\equiv u_0\!\!\pmod{|s|},\ w\equiv w_0\!\!\pmod{|t|}\}
    \end{aligned}
  \end{equation}
  (a sub-coset of \(R\), picked out via the internal coordinates of Definition~\ref{def:Rcoset}).
  Then
  \begin{equation}
    \label{eq:splitting-coset-conclusion}
    \tag{S}
    \bigcup_{k\in\Z} \ell_{(P,Q),\,c_0+PQk} \;=\; R_{u_0,w_0}, \qquad c_0 =
    Q(x_0+n_1u_0)-P(y_0+n_2w_0).
  \end{equation}
  In particular \((P,Q)\) is itself a primitive direction: \(\gcd(P,Q)=1\).
\end{lemma}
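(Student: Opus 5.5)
The plan is to reduce everything to Lemma~\ref{lem:coset}, applied with \((p,q)=(P,Q)\) and base point \((x_0',y_0')=(x_0+n_1u_0,\,y_0+n_2w_0)\). With this choice the constant \(c_0\) in \eqref{eq:splitting-coset-conclusion} is exactly the \(c_0\) of \eqref{eq:c0-def}. Lemma~\ref{lem:coset} then identifies the left side of \eqref{eq:splitting-coset-conclusion} with
\[
C=\{(x,y):x\equiv x_0'\pmod{|P|},\ y\equiv y_0'\pmod{|Q|}\}.
\]
This leaves two tasks. First, verify the hypothesis of Lemma~\ref{lem:coset}, namely \(\gcd(P,Q)=1\); this also gives the ``in particular'' clause. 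Second, show \(C=R_{u_0,w_0}\).

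For coprimality, suppose a prime \(\ell\) divides both \(P=n_1s\) and \(Q=n_2t\). There are four cases, according to whether \(\ell\) divides \(n_1\) or \(s\), and \(n_2\) or \(t\):
\begin{itemize}
\item \(\ell\mid n_1\) and \(\ell\mid n_2\) contradicts \(\gcd(n_1,n_2)=1\), which is part of Definition~\ref{def:Rcoset};
\item \(\ell\mid n_1\) and \(\ell\mid t\) contradicts \(\gcd(t,n_1)=1\);
\item \(\ell\mid s\) and \(\ell\mid n_2\) contradicts \(\gcd(s,n_2)=1\);
\item \(\ell\mid s\) and \(\ell\mid t\) contradicts \(\gcd(s,t)=1\).
\end{itemize}
This is exactly where all three conditions in \eqref{eq:splitting-hyps} are used, together with the coprimality of \(n_1,n_2\). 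Nonzeroness of \(P,Q\) follows from \(s,t\ne0\) and \(n_i>0\).

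For the set equality, I would work coordinatewise and observe that the conditions on \(x\) and on \(y\) decouple, both in \(C\) and in \eqref{eq:Rsub-def}.

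Consider first a point \((x,y)\in C\). Then \(x-x_0'\in n_1s\Z\), so \(x\equiv x_0\pmod{n_1}\). Writing \(x=x_0+n_1u\), we get \(n_1(u-u_0)\in n_1 s\Z\), and cancelling \(n_1>0\) gives \(u\equiv u_0\pmod{|s|}\). The same argument applies to \(y\) with \(n_2,t,w\).

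Conversely, if \(x=x_0+n_1u\) with \(u=u_0+sm\), then \(x-x_0'=n_1sm\in P\Z\), and similarly for \(y\).

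Uniqueness of internal coordinates (Definition~\ref{def:Rcoset}) guarantees that the phrase ``for some \(u\equiv u_0\)'' in \eqref{eq:Rsub-def} refers to the unique \(u\), so no ambiguity arises.

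There is no real obstacle. The only point needing care is the coprimality check, where one must notice that \(\gcd(n_1,n_2)=1\) is needed in addition to \eqref{eq:splitting-hyps}. One must also note the harmless passage between \(P\) and \(|P|\) in the congruences of Lemma~\ref{lem:coset}.
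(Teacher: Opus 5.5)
Your proof is correct, and for the identity \eqref{eq:splitting-coset-conclusion} it takes a different route from the paper. The primitivity check is the same four-case prime argument the paper gives.

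The paper works inside \(R\). It writes points as \((x_0+n_1u,\,y_0+n_2w)\) and computes \(Qx-Py=(Qx_0-Py_0)+n_1n_2(tu-sw)\). It then applies Lemma~\ref{lem:coset} to the coprime pair \((s,t)\) in the internal \((u,w)\)-plane, and translates the local levels \(tu_0-sw_0+stk\) into the global levels \(c_0+PQk\). You instead apply Lemma~\ref{lem:coset} once, globally, to \((P,Q)\) at the base point \((x_0+n_1u_0,\,y_0+n_2w_0)\). The rest then reduces to comparing congruence conditions coordinate by coordinate.

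The paper's route shows the self-similarity behind the recursion: inside \(R\), the new lines are just \((s,t)\)-lines in internal coordinates. But as written, its computation only looks at lattice points already in \(R\). So what it literally establishes is \(R\cap\bigcup_k\ell_{(P,Q),c_0+PQk}=R_{u_0,w_0}\). The main proof's disjointness step also needs the lines to have no lattice points outside \(R\). That requires \(\gcd(P,Q)=1\), which the paper proves but does not invoke in this part.

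Your route uses primitivity exactly where it is needed. Lemma~\ref{lem:coset} for \((P,Q)\) already places every lattice point of each line in \(C\), so confinement is automatic. Your explicit observation that \(P,Q\ne0\) (from \(s,t\ne0\) and \(n_i>0\)) is also worthwhile, since coprimality alone would not exclude a zero entry.
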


\begin{proof}
  \emph{Primitivity of \((P,Q)\).} A prime dividing both \(P=n_1s\) and \(Q=n_2t\) would have to divide
  one of \(n_1,n_2\) (excluded, as \(\gcd(n_1,n_2)=1\)), or \(n_1\) and \(t\) (excluded by \(\gcd(t,n_1)=1\)
  in~\eqref{eq:splitting-hyps}), or \(s\) and \(n_2\) (excluded by \(\gcd(s,n_2)=1\)), or \(s\) and \(t\)
  (excluded by \(\gcd(s,t)=1\)). These four cases are exhaustive, so no such prime exists.

  \emph{The identity~\eqref{eq:splitting-coset-conclusion}.} For \((x,y)=(x_0+n_1u,\,y_0+n_2w)\),
  \begin{equation}
    \label{eq:splitting-phi-eval}
    Qx-Py \;=\; Q(x_0+n_1u) - P(y_0+n_2w) \;=\; (Qx_0-Py_0) + n_1n_2(tu-sw),
  \end{equation}
  using \(Q=n_2t\), \(P=n_1s\). Put \(c_0^{\mathrm{loc}}=tu_0-sw_0\). Then, by Lemma~\ref{lem:coset}
  applied to the coprime pair \((s,t)\) in the \emph{internal} coordinates \((u,w)\),
  \begin{equation}
    \label{eq:splitting-local-coset}
    \bigcup_{k\in\Z}\{(u,w) : tu-sw = c_0^{\mathrm{loc}}+stk\} \;=\; \{u\equiv
    u_0\!\!\pmod{|s|},\ w\equiv w_0\!\!\pmod{|t|}\}.
  \end{equation}
  Substituting the local level \(c_0^{\mathrm{loc}}+stk\) into~\eqref{eq:splitting-phi-eval} via
  \(x_0,y_0\) translates it into the global level
  \begin{equation}
    \label{eq:splitting-level-translate}
    \begin{aligned}
      (Qx_0-Py_0) + n_1n_2\big(c_0^{\mathrm{loc}}+stk\big) \;&=\; c_0 + n_1n_2st\,k \\
      \;&=\; c_0+PQk,
    \end{aligned}
  \end{equation}
  where the first equality uses \(c_0=(Qx_0-Py_0)+n_1n_2c_0^{\mathrm{loc}}\) (a direct computation
  confirming this matches the formula for \(c_0\) stated in~\eqref{eq:splitting-coset-conclusion}),
  and the second uses \(PQ=n_1n_2st\). Combining~\eqref{eq:splitting-local-coset} and
  \eqref{eq:splitting-level-translate} with~\eqref{eq:splitting-phi-eval} gives exactly
  \eqref{eq:splitting-coset-conclusion}.
\end{proof}

\begin{example}
  \label{ex:splitting-worked}
  Take \(n_1=2\), \(n_2=3\), \(x_0=y_0=1\), so
  \[
    R=R(2,3,1,1)=\{(x,y):x\text{ odd},\ y\equiv1\!\!\pmod3\}.
  \]
  Take \(s=2\), \(t=3\); one checks directly that~\eqref{eq:splitting-hyps} holds:
  \begin{align}
    \gcd(s,n_2) &= \gcd(2,3) = 1, \\
    \gcd(t,n_1) &= \gcd(3,2) = 1, \\
    \gcd(s,t) &= \gcd(2,3) = 1.
  \end{align}
  Then \(P=n_1s=4\), \(Q=n_2t=9\) (and indeed \(\gcd(4,9)=1\)). Take the residue pair \((u_0,w_0)=(1,2)\);
  then
  \begin{equation}
    c_0 = Q(x_0+n_1u_0)-P(y_0+n_2w_0) = 9\cdot(1+2)-4\cdot(1+6) = 27-28 = -1.
  \end{equation}
  Three points of \(R_{1,2}\), at three different values of \(k\) in
  \eqref{eq:splitting-coset-conclusion}:
  \begin{align}
    (u,w)&=(1,2) \;\longmapsto\; (x,y)=(3,7): \notag\\
    &\varphi_{(4,9)}(3,7)=27-28=-1=c_0+36\cdot0, \\
    (u,w)&=(3,2) \;\longmapsto\; (x,y)=(7,7): \notag\\
    &\varphi_{(4,9)}(7,7)=63-28=35=c_0+36\cdot1, \\
    (u,w)&=(1,5) \;\longmapsto\; (x,y)=(3,16): \notag\\
    &\varphi_{(4,9)}(3,16)=27-64=-37=c_0+36\cdot(-1),
  \end{align}
  using \(PQ=36\) throughout.
\end{example}

\begin{figure}[htbp]
  \centering
  \begin{tikzpicture}[scale=0.42]
    \begin{scope}
      \clip (-1.6,-2.6) rectangle (12.6,20.6);
      \foreach \x in {-1,...,12} \foreach \y in {-2,...,20}
        \fill[gray!25] (\x,\y) circle (0.10);
      \foreach \x in {1,3,...,11} \foreach \y in {1,4,...,19}
        \fill[gray!75] (\x,\y) circle (0.20);
      \foreach \r in {-37,-1,35,71}
        \draw[blue!65!black, line width=0.6pt] ({(\r+4*(-6))/9},-6) -- ({(\r+4*26)/9},26);
      \foreach \x in {3,7,11} \foreach \y in {7,16}
        \fill[blue!65!black] (\x,\y) circle (0.31);
      \draw[->, red!70!black, line width=0.9pt] (3,7) -- (7,16);
    \end{scope}
    \node[red!70!black, font=\scriptsize, right] at (5.4,10.6) {$(P,Q)=(4,9)$};
    \node[blue!65!black, font=\scriptsize, above] at (8.98,20.6) {$r_0=-1$};
  \end{tikzpicture}
  \caption{Example~\ref{ex:splitting-worked}. Small grey dots are \(\Z^2\); medium grey dots the
    coarse coset \(R=R(2,3,1,1)=\{x \text{ odd},\ y\equiv1\bmod 3\}\); large dots the fine
    sub-coset \(R_{1,2}=\{x\equiv3\bmod4,\ y\equiv7\bmod9\}\) singled out by the residue pair
    \((u_0,w_0)=(1,2)\). By Lemma~\ref{lem:splitting-coset} that sub-coset is again a plain coset,
    now for the direction \((P,Q)=(4,9)\), and Lemma~\ref{lem:coset} applies to it verbatim: the
    lines shown are \(\ell_{(4,9),r_k}\) with \(r_k=-1+36k\). Consecutive points along a line
    differ by \((4,9)\); the arrow marks \((3,7)\mapsto(7,16)\), two of the three points computed
    in the example. Note that the lines pass through \emph{no} point of \(R\) outside
    \(R_{1,2}\) --- that confinement is the content of the lemma, and what makes the recursion
    safe.}
  \label{fig:splitting-example}
\end{figure}

\begin{lemma}[Splitting: partition]
  \label{lem:splitting-partition}
  With \(R\), \(s\), \(t\) as in Lemma~\ref{lem:splitting-coset},
  \begin{equation}
    \label{eq:splitting-partition-conclusion}
    R \;=\; \bigsqcup_{\substack{0\le u_0<|s| \\ 0\le w_0<|t|}} R_{u_0,w_0}
  \end{equation}
  (a disjoint union over the \(|s||t|\) residue pairs).
\end{lemma}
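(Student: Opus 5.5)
The plan is to work entirely in the internal coordinates of Definition~\ref{def:Rcoset}. The map \((u,w)\mapsto(x_0+n_1u,\,y_0+n_2w)\) is a bijection from \(\Z^2\) onto \(R\), with injectivity coming from the uniqueness of internal coordinates, which in turn uses \(n_1,n_2>0\). Under this bijection the sub-coset \(R_{u_0,w_0}\) of \eqref{eq:Rsub-def} is the image of the set
\[
  \{(u,w)\in\Z^2 : u\equiv u_0\!\!\pmod{|s|},\ w\equiv w_0\!\!\pmod{|t|}\}.
\]
Since the bijection is injective, the claimed partition \eqref{eq:splitting-partition-conclusion} of \(R\) is equivalent to the corresponding partition of \(\Z^2\) in internal coordinates. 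So the whole argument is transported back to \(\Z^2\).

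On \(\Z^2\) the statement is just the Chinese-remainder-free fact that \(\Z^2=\Z\times\Z\) is partitioned by residue classes coordinatewise. Since \(s,t\neq0\), every integer \(u\) has a unique residue \(u_0\in\{0,\dots,|s|-1\}\) with \(u\equiv u_0\pmod{|s|}\), namely the remainder on division by \(|s|\). Likewise every \(w\) has a unique \(w_0\in\{0,\dots,|t|-1\}\). Hence each \((u,w)\) lies in exactly one of the \(|s||t|\) product classes. Existence of \((u_0,w_0)\) gives covering; uniqueness gives disjointness, because if \(0\le u_0,u_0'<|s|\) are congruent mod \(|s|\) then \(u_0=u_0'\).

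Finally, I will note that the classes indexed as in \eqref{eq:splitting-partition-conclusion} are pairwise distinct and nonempty: \((u_0,w_0)\) itself lies in \(R_{u_0,w_0}\) via its image point. There is no real obstacle here. The only step needing care is the point that membership in \(R_{u_0,w_0}\) is defined via \emph{some} internal coordinates, and uniqueness of internal coordinates is what makes this well defined and makes disjointness transfer from \(\Z^2\) to \(R\). None of the coprimality hypotheses \eqref{eq:splitting-hyps} are needed for this half; they matter only for Lemma~\ref{lem:splitting-coset}.
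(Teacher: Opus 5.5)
Your proof is correct and is essentially the paper's argument: the paper also fixes a point of \(R\) by its unique internal coordinates \((u,w)\), takes the residues of \(u\) and \(w\) in \([0,|s|)\) and \([0,|t|)\) to get covering, and combines uniqueness of internal coordinates with uniqueness of those residues to get disjointness. Your transport to \(\Z^2\) via the coordinate bijection repackages the same steps, and your added observation that each \(R_{u_0,w_0}\) is nonempty is worth keeping, since the density part of the main proof relies on it.
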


\begin{proof}
  Fix \((x,y)\in R\), with internal coordinates \((u,w)\) as in Definition~\ref{def:Rcoset}. By the
  division algorithm,
  \[
    u=s\Big\lfloor\frac us\Big\rfloor+u_0, \qquad 0\le u_0<|s|,
  \]
  for a unique such \(u_0\) (\(u_0\) is the unique representative of \(u\) modulo \(s\) lying in
  \([0,|s|)\), and similarly \(w_0\in[0,|t|)\) for \(w\) modulo \(t\)). By construction,
  \[
    u\equiv u_0\!\!\pmod{|s|} \quad\text{and}\quad w\equiv w_0\!\!\pmod{|t|},
  \]
  so \((x,y)\in R_{u_0,w_0}\) for this particular pair
  \((u_0,w_0)\), establishing
  \[
    R\;\subseteq\;\bigcup_{u_0,w_0}R_{u_0,w_0};
  \]
  the reverse inclusion is immediate from~\eqref{eq:Rsub-def}. For disjointness (equivalently,
  uniqueness of \((u_0,w_0)\) for each point): suppose
  \[
    (x,y)\in R_{u_0,w_0}\cap R_{u_0',w_0'}
  \]
  via internal coordinates \((u,w)\) and \((u'',w'')\) respectively. Since internal coordinates are unique
  (Definition~\ref{def:Rcoset}), \(u''=u\) and \(w''=w\), so
  \[
    u_0\equiv u\equiv u_0'\!\!\pmod{|s|};
  \]
  as both
  \(u_0,u_0'\in[0,|s|)\), an interval of length \(|s|\), and their difference is a multiple of \(s\) of
  absolute value \(<|s|\), we get \(u_0=u_0'\), and symmetrically \(w_0=w_0'\).
\end{proof}

\begin{figure}[htbp]
  \centering
  \begin{tikzpicture}[scale=0.42]
    \begin{scope}
      \clip (-1.6,-2.6) rectangle (12.6,20.6);
      \foreach \x in {-1,...,12} \foreach \y in {-2,...,20}
        \fill[gray!25] (\x,\y) circle (0.10);
      \foreach \x in {1,5,9} {
        \foreach \y in {1,10,19} \fill[red!70!black]    (\x,\y) circle (0.27);
        \foreach \y in {4,13}    \fill[green!45!black]  (\x,\y) circle (0.27);
        \foreach \y in {7,16}    \fill[blue!65!black]   (\x,\y) circle (0.27);
      }
      \foreach \x in {3,7,11} {
        \foreach \y in {1,10,19} \fill[red!70!black]    (\x-0.25,\y-0.25) rectangle (\x+0.25,\y+0.25);
        \foreach \y in {4,13}    \fill[green!45!black]  (\x-0.25,\y-0.25) rectangle (\x+0.25,\y+0.25);
        \foreach \y in {7,16}    \fill[blue!65!black]   (\x-0.25,\y-0.25) rectangle (\x+0.25,\y+0.25);
      }
      \foreach \x in {3,7,11} \foreach \y in {7,16}
        \draw[blue!65!black, line width=0.8pt] (\x,\y) circle (0.62);
    \end{scope}
    \node[font=\scriptsize, right] at (12.7,17.5) {$u_0=0$: \tikz\fill[gray!60] (0,0) circle (2.6pt);};
    \node[font=\scriptsize, right] at (12.7,15.5) {$u_0=1$: \tikz\fill[gray!60] (-0.09,-0.09) rectangle (0.09,0.09);};
    \node[font=\scriptsize, right, red!70!black]   at (12.7,12.5) {$w_0=0$};
    \node[font=\scriptsize, right, green!45!black] at (12.7,10.5) {$w_0=1$};
    \node[font=\scriptsize, right, blue!65!black]  at (12.7,8.5)  {$w_0=2$};
  \end{tikzpicture}
  \caption{Lemma~\ref{lem:splitting-partition} for the \(R\), \(s=2\), \(t=3\) of
    Example~\ref{ex:splitting-worked}. Every point of the coarse coset \(R\) carries exactly one of
    the \(|s||t|=6\) residue pairs \((u_0,w_0)\): shape records \(u_0\in\{0,1\}\), colour records
    \(w_0\in\{0,1,2\}\). The six classes are the sub-cosets \(R_{u_0,w_0}\), and the picture is the
    lemma: no point carries two labels and none carries none, so the classes are disjoint and
    exhaust \(R\). Circled is the class \(R_{1,2}\) of
    Figure~\ref{fig:splitting-example} --- one of the six, handed to a single new direction, while
    the other five remain available to later steps of the recursion.}
  \label{fig:splitting-partition}
\end{figure}

A naive approach to this construction might attempt to realize an \emph{arbitrary} target
direction by using a multiple of it known to lie in the current sublattice, without checking that
the multiple's own geometric line (which follows its \emph{primitive} direction, not the multiple
used to construct it) stays confined to the intended coset; this is exactly the pitfall
Lemma~\ref{lem:splitting-coset} avoids, via the primitivity argument built into its own proof.

\begin{lemma}[Steering]
  \label{lem:steering}
  Let \(n_1,n_2\) be coprime positive integers. For every \(\theta\in[0,\pi)\) and every
  \(\varepsilon>0\) there exist nonzero integers \(s,t\), \(|s|,|t|\ge2\), satisfying
  \eqref{eq:splitting-hyps}, such that the direction of \((n_1s,n_2t)\) lies within \(\varepsilon\) of
  \(\theta\).
\end{lemma}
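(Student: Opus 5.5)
Since the direction of \((n_1s,n_2t)\) is governed by the ratio \(n_2t/(n_1s)\), the task is to approximate a prescribed real slope \(\mu\) by ratios \(t/s\) of integers obeying the coprimality constraints~\eqref{eq:splitting-hyps}. The plan is to prescribe \(s\) as a large prime and then choose \(t\) in a short interval by a sieve.

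\emph{Reduction.} The map \((a,b)\mapsto\) direction of \((a,b)\) is continuous away from the origin. The target \(\theta\) is the direction of \((\cos\theta,\sin\theta)\), so it suffices to make the vector \((n_1s,n_2t)/(n_1|s|)\) close to \((\cos\theta,\sin\theta)/\cos\theta\) when \(|\cos\theta|\ge \sin\theta\), and symmetrically otherwise. By the symmetry \((n_1,s)\leftrightarrow(n_2,t)\) of~\eqref{eq:splitting-hyps} and the signs at our disposal, I assume \(\theta\) is not near vertical. Then I need \(|t/s-\mu|<\delta\) with \(\mu=n_1\tan\theta/n_2\) and \(\delta=\delta(\varepsilon,\theta,n_1,n_2)>0\).

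\emph{Choice of \(s\).} Take \(s\) to be a prime larger than \(n_1n_2\), larger than \(2\), and larger than \(2/\delta\) plus whatever is needed below. Then \(\gcd(s,n_2)=1\) automatically. Moreover \(\gcd(s,t)=1\) holds unless \(s\mid t\), and I will exclude that.

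\emph{Choice of \(t\) (the main step).} I need an integer \(t\) in the interval \(I=(s\mu-s\delta,\,s\mu+s\delta)\) that is coprime to \(n_1\), not divisible by \(s\), and satisfies \(|t|\ge2\). The interval \(I\) has length \(2s\delta\), which grows with \(s\). Among any \(n_1\) consecutive integers, \(\varphi(n_1)\ge1\) are coprime to \(n_1\); this is the elementary sieve bound, and it can also be seen via inclusion--exclusion over the primes of \(n_1\). So once \(2s\delta> 2n_1+4\), the interval \(I\) contains at least two integers coprime to \(n_1\) that lie in different blocks of length \(n_1\). At most one of these can be divisible by \(s\), since \(I\) has length \(<s\) when \(\delta<1/2\). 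At most finitely many, and in any case boundedly many, have \(|t|<2\), and this is removed by taking the interval longer, e.g.\ \(2s\delta>2n_1+2\cdot 4\) gives at least four candidates. Near \(\mu=0\) the same counting works because only \(t\in\{-1,0,1\}\) are excluded. Pick such a \(t\). Then \(t\ne0\), \(|s|\ge2\), \(|t|\ge2\), and~\eqref{eq:splitting-hyps} holds.

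\emph{Obstacle.} The delicate point is the interplay of the three conditions in the short interval, particularly \(\gcd(s,t)=1\) together with \(|t|\ge2\) when the target slope is tiny. Fixing \(s\) prime first decouples these conditions, and leaves only the count of residues coprime to \(n_1\). That count is the step I would verify most carefully, together with the near-vertical case, which I handle by the symmetric argument: choose \(t\) prime and then \(s\) by the sieve, with coprimality to \(n_2\).
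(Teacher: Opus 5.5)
Your proof is correct and follows essentially the same route as the paper: take $s$ to be a large prime, then find $t$ coprime to $n_1$ by an elementary sieve in a window around the scaled target slope, handling the near-vertical case through the symmetry of \eqref{eq:splitting-hyps}. The differences are minor: you get $\gcd(s,t)=1$ because the window is shorter than $s$ and so contains at most one multiple of $s$, where the paper sieves modulo $n_1p$ by inclusion--exclusion; and you secure $|t|\ge2$ by discarding $t\in\{-1,0,1\}$, where the paper shifts the window by $+2$ after reducing to $\mu\ge0$.
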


\begin{proof}
  By symmetry (swap the roles of \(s\) and \(t\) to treat \(\theta=\pi/2\), which targets reciprocal
  slope \(0\) and is handled by the same argument below with the roles of \(n_1,n_2\) exchanged,
  subject to the same window shift introduced at~\eqref{eq:steering-t0}) it suffices to treat
  \(\theta\ne\pi/2\), i.e.\ a finite target slope \(\mu=\tan\theta\); the case \(\mu<0\) is identical
  with \(t\mapsto-t\), so assume \(\mu\ge0\).

  Set
  \begin{equation}
    \label{eq:steering-constants}
    \delta \;=\; \frac12\prod_{r\mid n_1}\Big(1-\frac1r\Big) \;>\; 0, \qquad E \;=\;
    2^{\omega(n_1)+1},
  \end{equation}
  both depending only on \(n_1\), where \(\omega(n_1)\) is the number of distinct prime divisors of
  \(n_1\). For a prime \(p\nmid n_1n_2\) and any interval \(I\subset\Z\) of length \(L\),
  inclusion--exclusion over the divisors of \(n_1p\) gives
  \begin{align}
    \#\{t\in I : \gcd(t,n_1p)=1\}
      &\;\ge\; L\prod_{r\mid n_1}\Big(1-\frac1r\Big)\Big(1-\frac1p\Big) - 2^{\omega(n_1)+1} \notag\\
      &\;\ge\; \delta L - E, \label{eq:steering-sieve}
  \end{align}
  using \(p\ge2\) for the last factor. Fix
  \begin{equation}
    \label{eq:steering-L0}
    L_0 = \Big\lceil \frac{E+1}{\delta} \Big\rceil,
  \end{equation}
  independent of \(p\) --- and, crucially, independent of \(\mu\) as well. Choose a prime \(p\nmid n_1n_2\)
  large enough that
  \begin{equation}
    \label{eq:steering-p-choice}
    L_0 + 2 \;<\; \frac{\varepsilon n_1p}{n_2}
  \end{equation}
  This is possible since the right side \(\to\infty\) as \(p\to\infty\), for fixed
  \(\varepsilon,n_1,n_2\). Note that \eqref{eq:steering-p-choice} does not involve \(\mu\) at all,
  unlike \(t_0\) below. Set \(s=p\) and
  \begin{equation}
    \label{eq:steering-t0}
    t_0 = \Big\lfloor \frac{\mu n_1p}{n_2} \Big\rfloor + 2.
  \end{equation}
  The \(+2\) shift in~\eqref{eq:steering-t0} is what
  makes the forthcoming \(|t|\ge2\) requirement hold \emph{unconditionally} rather than only "for \(p\)
  large": since \(\mu\ge0\), already \(t_0\ge2\) by~\eqref{eq:steering-t0}, so every integer in the
  window \(I\) below inherits \(|t|\ge2\) directly, with no degenerate behaviour at \(\mu=0\) (where,
  without the shift, \(t_0=0\) for \emph{every} \(p\), the window never moves away from the origin as
  \(p\to\infty\), and \(t=1\), always coprime to \(n_1p\), is a witness that nothing rules out being
  the \emph{only} one, violating \(|t|\ge2\)). The interval \(I=[t_0,t_0+L_0)\) then contains, by
  \eqref{eq:steering-sieve} with \(L=L_0\) and~\eqref{eq:steering-L0}, at least \(\delta L_0-E\ge1\)
  integer \(t\) with \(\gcd(t,n_1p)=1\) --- equivalently \(\gcd(t,n_1)=\gcd(t,s)=1\), as \(s=p\) is prime
  --- and, as just noted, \(|t|\ge2\) automatically. For any such \(t\),
  \begin{equation}
    \label{eq:steering-final-bound}
    \Big|\frac{n_2t}{n_1s}-\mu\Big| \;=\; \frac{n_2}{n_1p}\Big|t-\frac{\mu n_1p}{n_2}\Big| \;<\;
    \frac{n_2(L_0+2)}{n_1p} \;<\; \varepsilon,
  \end{equation}
  using~\eqref{eq:steering-p-choice} for the last step and \(\big|t-\mu n_1p/n_2\big|<L_0+2\), which
  follows from \(t\in[t_0,t_0+L_0)\) and \(t_0-\mu n_1p/n_2\in(1,2]\) by~\eqref{eq:steering-t0}, so
  \((P,Q)=(n_1s,n_2t)\) has slope within \(\varepsilon\) of \(\mu\), hence (as \(\arctan\) is
  \(1\)-Lipschitz, and \(\arctan\mu=\theta-\pi\) when \(\theta>\pi/2\) identifies the same point of
  \(\RP\)) direction within \(\varepsilon\) of \(\theta\).
\end{proof}

The point of Lemma~\ref{lem:steering} is that it is \emph{not} enough to know that \emph{some}
admissible \((s,t)\) exists at every stage: repeatedly reusing, say, \((s,t)=(2,3)\) is admissible
forever (all three coprimality conditions of~\eqref{eq:splitting-hyps} hold trivially against the
resulting \(n_1=2^{k},n_2=3^{k}\)), yet gives directions converging to a single point of \(\RP\), not a
dense set. What is needed, and what Lemma~\ref{lem:steering} provides, is the ability to steer
toward an \emph{arbitrary prescribed} target at every stage.

\begin{remark}[Tails of a dense sequence remain dense]
  \label{rem:tail-density}
  If \(X\) is a \(T_1\) topological space with no isolated points, \(D\subseteq X\) is dense, and
  \(F\subset X\) is finite, then \(D\setminus F\) is still dense in \(X\). This follows since any
  nonempty open \(U\) is itself infinite, as no point of \(X\) is isolated; so \(U\setminus F\) is
  nonempty and open, hence meets \(D\), giving a point of \(D\setminus F\) in \(U\). If we apply this
  with \(X=\RP\) --- compact, metric, hence \(T_1\), and with no isolated points, since it is a
  circle --- and \(D=\{\theta_k\}_{k\ge1}\), then every tail \(\{\theta_k\}_{k\ge N}\) is dense too,
  since it contains \(D\setminus\{\theta_1,\dots,\theta_{N-1}\}\).
  This is the fact the density argument below needs beyond mere density of \(\{\theta_k\}\) itself
  --- without ``no isolated points'' it can fail (e.g.\ a sequence dense in \(\{0\}\cup[1,2]\) that
  visits \(0\) only once).
\end{remark}

We now have all pieces needed to prove Theorem~\ref{thm:main}.

\begin{proof}[Proof of Theorem~\ref{thm:main}]
  Fix an enumeration of \(\Z^2\), \((z_k)_{k\ge1}\), and a sequence \((\theta_k)_{k\ge1}\) dense in
  \([0,\pi)\) (e.g.\ \(\theta_k=k\alpha\bmod\pi\) for \(\alpha/\pi\) irrational, by equidistribution of
  the irrational rotation). Set \(R_0=\Z^2\), i.e.\ \((n_1,n_2,x_0,y_0)=(1,1,0,0)\).

  Given \(R_{k-1}=R(n_1,n_2,x_0,y_0)\), apply Lemma~\ref{lem:steering} to find \(s,t\) realizing a
  direction within \(1/k\) of \(\theta_k\) --- \(s,t\) may come out negative, and
  Lemma~\ref{lem:steering} allows this, since both signs of slope need to be reachable. Then apply
  Lemmas~\ref{lem:splitting-coset}--\ref{lem:splitting-partition} to split \(R_{k-1}\) into its
  \(|s||t|\ge4\) sub-cosets, each fully coverable by direction \((P_k,Q_k)=(n_1s,n_2t)\) restricted
  to one residue class of levels. Put into \(\mathcal F\) all such lines for every sub-coset except
  one: if \(z_k\in R_{k-1}\), reserve the lexicographically-least \((u_0,w_0)\) whose sub-coset does
  not contain \(z_k\), which is possible since there are \(\ge4\) sub-cosets and only one need be
  avoided; otherwise reserve the lexicographically-least \((u_0,w_0)\) outright. Let
  \(R_k=R_{u_0,w_0}\) be that reserved sub-coset; Figure~\ref{fig:nesting} shows the first two
  stages of the resulting chain \(\Z^2=R_0\supset R_1\supset R_2\). The recursion rests on two
  selection rules, of rather different characters; see Remark~\ref{rem:choices}.

  The two quantities that matter going forward are genuinely different. The \emph{achieved
  direction} recorded for the density argument below is the possibly-signed pair
  \((P_k,Q_k)=(n_1s,n_2t)\). But \(R_k\) itself must serve as the input \(R(n_1',n_2',x_0',y_0')\)
  to the \emph{next} stage's application of Lemma~\ref{lem:steering}, and that lemma's hypotheses
  require positive \(n_1',n_2'\), just as Definition~\ref{def:Rcoset} does; so \(R_k\) must instead
  be written using the \emph{positive} pair
  \[
    n_1'=n_1|s|, \quad n_2'=n_2|t|, \quad x_0'=x_0+n_1u_0, \quad y_0'=y_0+n_2w_0.
  \]
  If we write the internal coordinates as \(u=u_0+|s|u'\), \(w=w_0+|t|w'\) --- exactly what
  ``\(u\equiv u_0\pmod{|s|}\)'' means --- then a direct check against the definition of
  \(R_{u_0,w_0}\)~\eqref{eq:Rsub-def} shows \(R_{u_0,w_0}=R(n_1|s|,n_2|t|,x_0',y_0')\) on the nose.

\begin{figure}[htbp]
  \centering
  \begin{tikzpicture}[scale=0.42]
    \begin{scope}
      \clip (-1.6,-2.6) rectangle (13.6,20.6);
      \foreach \x in {-1,...,13} \foreach \y in {-2,...,20}
        \fill[gray!25] (\x,\y) circle (0.10);
      \draw[gray!55] (-1.6,0) -- (13.6,0);
      \draw[gray!55] (0,-2.6) -- (0,20.6);
      \foreach \x in {0,2,...,12} \foreach \y in {1,4,...,19}
        \fill[gray!75] (\x,\y) circle (0.20);
      \foreach \x in {0,4,8,12} \foreach \y in {1,10,19}
        \fill[blue!65!black] (\x,\y) circle (0.31);
      \draw[red!75!black, line width=0.9pt] (-0.30,-0.30) -- (0.30,0.30);
      \draw[red!75!black, line width=0.9pt] (-0.30,0.30) -- (0.30,-0.30);
    \end{scope}
    \node[red!75!black, font=\scriptsize, right] at (0.5,-1.4) {$z_1=(0,0)$};
    \fill[gray!45]       (14.1,16.0) circle (0.10);
    \node[font=\scriptsize, right] at (14.4,16.0) {$R_0=\Z^2$};
    \fill[gray!75]       (14.1,13.0) circle (0.20);
    \node[font=\scriptsize, right] at (14.4,13.0) {$R_1$};
    \fill[blue!65!black] (14.1,10.0) circle (0.31);
    \node[font=\scriptsize, right] at (14.4,10.0) {$R_2$};
  \end{tikzpicture}
  \caption{The first two stages of the nested chain \(\Z^2=R_0\supset R_1\supset R_2\) built in the
    proof of Theorem~\ref{thm:main}. Small grey dots are \(R_0=\Z^2\); medium dots
    \(R_1=R(2,3,0,1)=\{x\equiv0\bmod2,\ y\equiv1\bmod3\}\); large dots
    \(R_2=R(4,9,0,1)=\{x\equiv0\bmod4,\ y\equiv1\bmod9\}\). Each stage claims everything it does
    \emph{not} reserve, so the region shrinks while the claimed set grows to fill \(\Z^2\).
    The run shown takes \((s,t)=(2,3)\) at both stages and the enumeration point \(z_1=(0,0)\)
    (crossed). Since \(z_1\in R_0\), stage~1 may not reserve the sub-coset containing it: the
    lexicographically least \emph{eligible} residue pair is \((0,1)\) rather than \((0,0)\), which
    is why \(R_1\) consists of \(y\equiv1\) rather than \(y\equiv0\) modulo \(3\). That reservation
    rule is exactly the one the Lean formalization computes; the pairs \((s,t)\) are illustrative
    only --- see Remark~\ref{rem:choices}.}
  \label{fig:nesting}
\end{figure}

  Write
  \(R_k=R(n_1^{(k)},n_2^{(k)},x_0^{(k)},y_0^{(k)})\) for this representation, i.e.\
  \(n_1^{(k)}=n_1|s|\), \(n_2^{(k)}=n_2|t|\) (with \(n_1^{(0)}=n_2^{(0)}=1\) from the base case), the
  notation used below. This keeps
  \[
    \gcd(n_1^{(k)},n_2^{(k)}) = \gcd(n_1|s|,n_2|t|) = \gcd(P_k,Q_k) = 1
  \]
  (Lemma~\ref{lem:splitting-coset}), so the invariant needed for the next iteration is preserved.

  \emph{Coverage invariant.} Stage \(i\) claims exactly \(R_{i-1}\setminus R_i\), and \(R_i\subseteq
  R_{i-1}\) for every \(i\ge1\). Telescoping from \(R_0=\Z^2\) gives, for every \(k\),
  \begin{equation}
    \label{eq:main-coverage-invariant}
    \Z^2\setminus R_{k-1} \;=\; \bigcup_{i<k}(R_{i-1}\setminus R_i) \;=\; \bigcup_{i<k}
    (\text{stage-}i\text{ claims}),
  \end{equation}
  i.e.\ a point lies outside the current region \(R_{k-1}\) exactly when some earlier stage already
  claimed it. This is what justifies the reservation rule above only ever needing to avoid \(z_k\)
  when \(z_k\in R_{k-1}\): whenever \(z_k\notin R_{k-1}\), \eqref{eq:main-coverage-invariant} shows
  \(z_k\) was already claimed by an earlier stage, so no action is needed for it at stage \(k\).

  \emph{Coverage.} By~\eqref{eq:main-coverage-invariant}, every \(z_k\) is eventually claimed: if
  \(z_k\in R_{k-1}\), the reservation rule ensures \(z_k\) lies in one of the claimed sub-cosets at
  stage \(k\) (not the reserved \(R_k\)); if \(z_k\notin R_{k-1}\), it was already claimed at some
  earlier stage. So
  \begin{equation}
    \label{eq:main-coverage}
    \bigcup_k(\text{stage-}k\text{ claims}) \;=\; \Z^2.
  \end{equation}
  Since the stage-\(k\) claims are exactly \(R_{k-1}\setminus R_k\), and \(R_0=\Z^2\), this says
  precisely that
  \[\bigcap_{k\ge0} R_k=\varnothing,\]
  that is: every point leaves the chain at some \emph{finite} stage. This is what makes the
  recursion self-contained at order type \(\omega\), with no step beyond those indexed by
  \(k\in\N\).

  Nesting alone would not give it. The rule ``always reserve the sub-coset containing the origin''
  produces a chain that is strictly decreasing, with the same densities
  \(1/(n_1^{(k)}n_2^{(k)})\to0\), and yet retains \((0,0)\) in every \(R_k\) --- a point that no line
  of \(\mathcal F\) would ever cover. What excludes this is the reservation rule's dependence on the
  enumeration \((z_k)\), and that works only because every point of \(\Z^2\) carries a finite index.

  \emph{Disjointness.} Every individual line placed into \(\mathcal F\) at stage \(k\) has all of its
  lattice points inside the sub-coset \(R_{u_0,w_0}\) it came from (a subset of \(R_{k-1}\setminus
  R_k\), since \(R_{u_0,w_0}\ne R_k\)): this is exactly~\eqref{eq:splitting-coset-conclusion} of
  Lemma~\ref{lem:splitting-coset} (each line on the left is a subset of the union), the step the
  earlier, flawed version of this construction got wrong (see the remark following
  Lemma~\ref{lem:splitting-coset}). So stage \(k\)'s claimed points lie in
  \[R_{k-1}\setminus  R_k\subset R_{k-1}.\]
  For \(j>k\), \(R_{j-1}\subseteq R_k\) (as \(R_i\subseteq R_{i-1}\) for every \(i\)),
  so stage \(j\)'s points, lying in \(R_{j-1}\), are disjoint from stage \(k\)'s. Hence distinct lines
  from different stages never share a lattice point, and distinct lines of the same direction from
  the same stage are disjoint level sets of the same \(\varphi_{(P_k,Q_k)}\), so never share one
  either.
  Different stages in fact always realize different directions: \(|P_k|=n_1^{(k)}\) is
  strictly increasing in \(k\), since
  \[n_1^{(k)}=n_1^{(k-1)}|s_k|\ge2n_1^{(k-1)}.\]
  The argument above does not need this.

  \emph{Density.} Fix \(\theta\in\RP\) and \(\varepsilon>0\). By Remark~\ref{rem:tail-density}, every
  tail \(\{\theta_k\}_{k\ge N}\) is dense in \(\RP\); so for any \(N\) there is some \(k\ge N\) with
  \(\theta_k\) within \(\varepsilon/2\) of \(\theta\). Taking
  \(N>2/\varepsilon\), the corresponding achieved direction \((P_k,Q_k)\) --- within \(1/k<\varepsilon/2\)
  of \(\theta_k\) --- then lies within \(\varepsilon\) of \(\theta\). Each \((P_k,Q_k)\) is genuinely
  realized in \(\mathcal F\): at least \(|s||t|-1\ge3\) of the \(\ge4\) sub-cosets at stage \(k\) are
  claimed, all nonempty, each contributing lines of that direction. So every \(\theta\in\RP\) is a
  limit of directions realized in \(\mathcal F\): the realized direction set is dense.

\end{proof}

\begin{remark}[The two selection rules]
  \label{rem:choices}
  The recursion in the proof above is not a definition until two things are pinned down: which
  admissible pair \((s,t)\) to take among those Lemma~\ref{lem:steering} merely asserts to exist,
  and which sub-coset to reserve. Neither affects the conclusion --- the argument uses only the
  \emph{properties} of the chosen objects, never their identity --- but a recursion is a
  well-defined function only once both are fixed. Informally one writes ``choose \(s,t\)'' and moves
  on; a formalization cannot, since the recursion has to be an actual term.

  The two are settled in genuinely different ways, and the contrast is worth noticing. The
  reservation rule is \emph{computable}: the reserved pair is the lexicographically least element of
  the finite set of residue pairs \((u_0,w_0)\in[0,|s|)\times[0,|t|)\) whose sub-coset avoids
  \(z_k\) --- a condition that is vacuous when \(z_k\notin R_{k-1}\), which is exactly the
  ``otherwise'' clause of the rule stated in the proof. That set is nonempty because there are at
  least \(|s||t|\ge4\) sub-cosets and, by Lemma~\ref{lem:splitting-partition}, \(z_k\) lies in
  exactly one of them, so at least three remain. The steering pair, by contrast, is obtained from
  the existence statement by classical choice: Lemma~\ref{lem:steering} singles out no canonical
  \((s,t)\), and the proof needs none.

  Why only one of the two is computable is a question of order type rather than of principle. The
  reservation is a minimum over the \emph{finite} set \([0,|s|)\times[0,|t|)\), so
  ``lexicographically least'' commits us to nothing: every total order on a finite set has finite
  order type, and a graded order would serve equally well --- it would generally reserve a different
  sub-coset, but the proof never uses which one. The steering pair, by contrast, is drawn from an
  \emph{infinite} subset of \(\Z^2\), and there the choice of order does matter. Ordered
  lexicographically, \(\Z^2\) has order type \(\omega^2\); a nonempty subset still has a least
  element, so the selection is well defined --- but locating it may require deciding whether an
  entire infinite column meets the set, which no terminating search can do. Under a \emph{graded}
  order --- by \(|s|+|t|\), say, with lexicographic tie-breaking --- the order type is \(\omega\),
  every pair has only finitely many predecessors, and the least admissible pair is found by running
  through candidates in that order until one passes the coprimality tests and the strict inequality
  of Lemma~\ref{lem:steering}. That search halts precisely because the order type is \(\omega\).

  So classical choice here is a convenience, not a necessity: taking the graded-least admissible
  pair makes the steering rule computable too, and with it the recursion as a whole. We keep the
  existential form because that is what the argument uses, and no canonical pair is needed for the
  theorem.\footnote{For term orders on \(\N^n\), which are required in addition to respect addition
  --- a constraint of no use here --- the possible order types are exactly
  \(\omega^1,\dots,\omega^n\). This is a consequence of the classification of such orders: by
  Robbiano's theorem~\cite{Robbiano1985} every one of them is given by a real matrix acting on
  exponent vectors and compared lexicographically, and Robbiano settles as well \emph{when two
  matrices determine the same order}, so the representation is genuinely understood rather than
  merely available. Eisenbud puts the classification in the equivalent form that every monomial
  order is a lexicographic product of at most \(n\) weight orders~\cite[Exercise~15.13]{Eisenbud1995};
  the number of factors is then the exponent of the order type. The moral survives in the present
  looser setting: what a terminating search needs is not compatibility with the group structure, but
  order type \(\omega\).}

  So the construction is a definite object once an enumeration \((z_k)\), a dense sequence
  \((\theta_k)\) and a selection of steering pairs are fixed; only the last of these is genuinely
  arbitrary. Figure~\ref{fig:nesting} shows one such run.
\end{remark}

\section{Pictures of the construction}

Figure~\ref{fig:nesting} above shows the nested regions \(R_k\) that drive the recursion;
this section instead looks at what the construction \emph{achieves}.
Figure~\ref{fig:directions} plots the first \(10\), \(20\), and \(100\) directions realized by an actual
run of the construction (code in \texttt{verification-code/}, targeting the equidistributed sequence
\(\theta_k = k\pi(\sqrt5-1)/2 \bmod \pi\), an irrational rotation by the golden ratio conjugate),
each direction shown as an antipodal pair of points
on the unit circle (a line's direction is unsigned) and colored by its position in the sequence.

\begin{figure}[h]
  \centering
  \includegraphics[width=\textwidth]{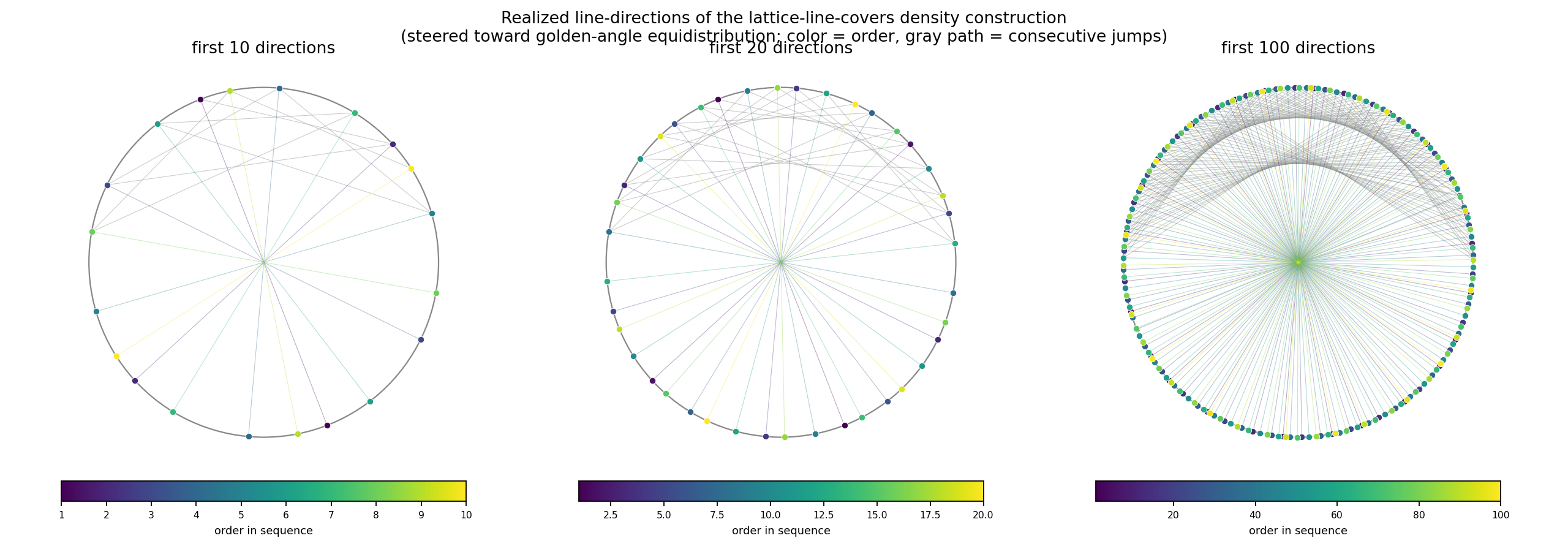}
  \caption{Realized directions after \(10\), \(20\), and \(100\) steps of the construction, colored by
  order (dark = early, light = late), with a thin path tracing consecutive jumps. The steering
  argument scatters the directions rather than sweeps them around the circle, since each target
  angle is chosen from an equidistributed sequence rather than visited in angular order.}
  \label{fig:directions}
\end{figure}

\begin{remark}[Sequence order against angular order]
  \label{rem:order-transverse}
  The points in Figure~\ref{fig:directions} are indexed by the stage \(k\) that produced them ---
  that index is what the color ramp shows --- and not by where they sit on \(\RP\). The two orders
  are necessarily different, and not merely incidentally so: an enumeration of a dense subset of
  \([0,\pi)\) cannot be monotone. If \(\theta_1<\theta_2<\cdots\), then no term at all lies in the
  nonempty open interval \((\theta_1,\theta_2)\), since every later term is at least \(\theta_2\);
  the set of terms therefore misses an open set and is not dense. A decreasing enumeration fails for
  the same reason. Density thus forces the enumeration to jump about, and an equidistributed target
  sequence makes it jump about as evenly as it can. The scatter in the figure is the construction's
  order type \(\omega\) lying transverse to the geometry of \(\RP\), not an artifact of the
  particular run.
\end{remark}

As a purely exploratory aside, Figure~\ref{fig:discmeasure} redraws \(1000\) realized directions
(extending the run behind Figure~\ref{fig:directions}) as point and line masses on the closed unit
disc: each direction contributes a dot at its two antipodal boundary points plus its diameter
chord, weighted by \(w_k=(k+5)^{-0.6}e^{-k/250}\). Unlike a weight such as \(1/k\) or \(1/(k+5)\),
this one is \emph{summable}, so the resulting measure genuinely converges as \(k\to\infty\) to a
fixed, permanently non-uniform limit --- no renormalization needed, and none of the visible
structure is a finite-\(k\) artifact fading toward the uniform measure on \(\RP\). The exponent and
cutoff were chosen so that no small handful of directions dominates the picture: the first \(5\)
directions carry only \(9.3\%\) of the total mass, the first \(20\) carry \(26\%\), the first
\(100\) carry \(62\%\); at \(k=1000\), \(99.6\%\) of the full \(k\to\infty\) total is already
accounted for, so the figure is an honest rendering of the actual limit rather than an arbitrary
truncation of it. Nothing in this manuscript rests on it --- see the Acknowledgements for how it
came about.

\begin{figure}[htbp]
  \centering
  \includegraphics[width=\textwidth]{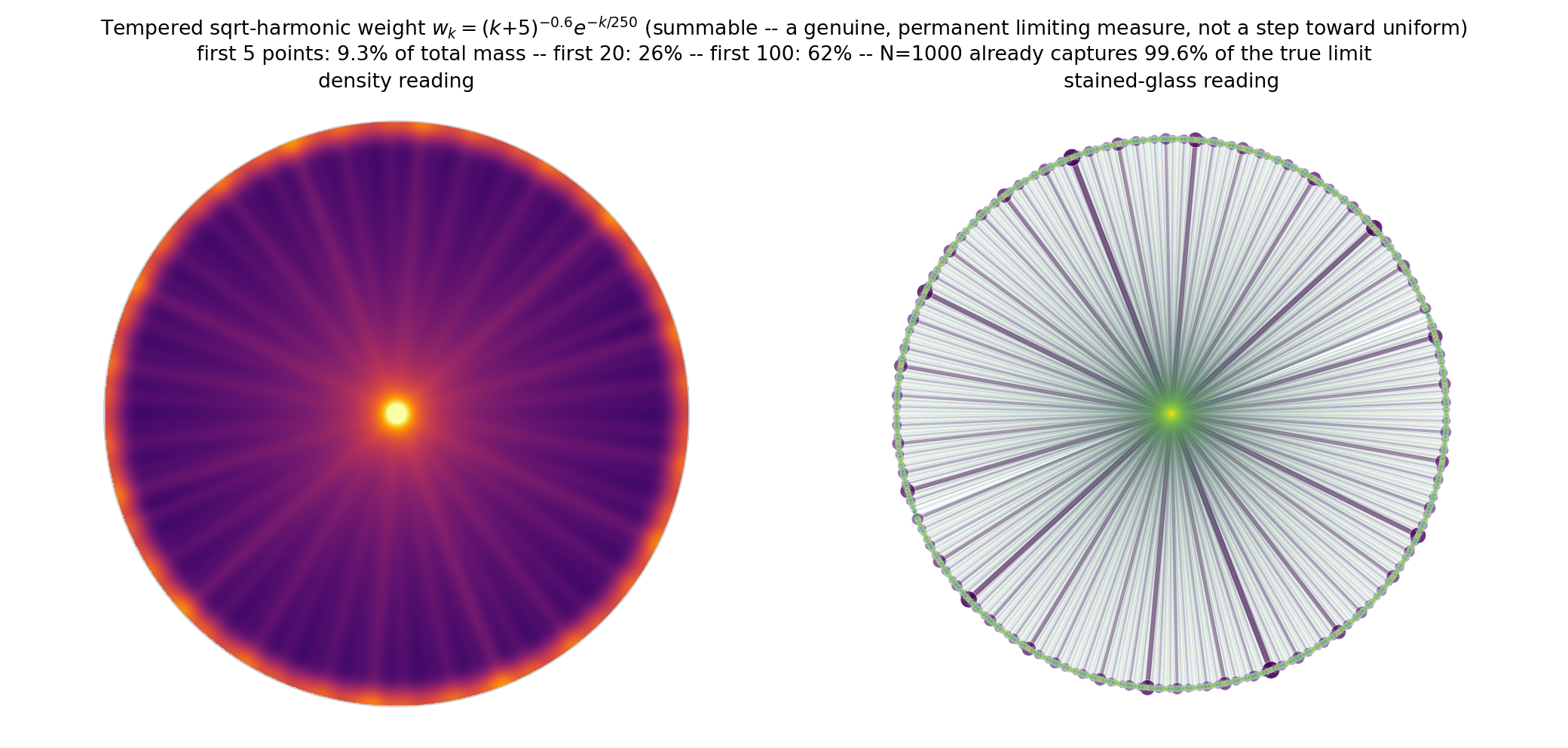}
  \caption{\(1000\) realized directions redrawn as point and line masses on the closed unit disc,
  weighted by the summable \(w_k=(k+5)^{-0.6}e^{-k/250}\) so the picture shows a genuine limiting
  measure rather than a step toward the uniform measure on \(\RP\) (see main text). Left: a
  smoothed density reading. Right: an order-colored overlay of the same weighted points and chords
  (dark purple = early, yellow-green = late). Purely exploratory --- not used anywhere else in this
  manuscript.}
  \label{fig:discmeasure}
\end{figure}

Two further exploratory pictures, again purely illustrative, follow the proof's own mechanism
rather than the directions it realizes. The \emph{coverage invariant} established above
(\eqref{eq:main-coverage-invariant}--\eqref{eq:main-coverage}) says that a point \((u,v)\in\Z^2\)
is claimed by exactly one stage \(k\), namely the unique \(k\) with \((u,v)\in R_{k-1}\setminus
R_k\); write \(\mathrm{stage}(u,v)=k\) for that index. The enumeration \((z_k)\) used throughout
the proof to decide reservations can be any enumeration of \(\Z^2\) in which every point has finite
index; the run behind Figure~\ref{fig:coveragetime} takes the concrete choice of ordering points by
\(L^\infty\)-distance from the origin, breaking ties lexicographically.

\begin{figure}[htbp]
  \centering
  \includegraphics[width=\textwidth]{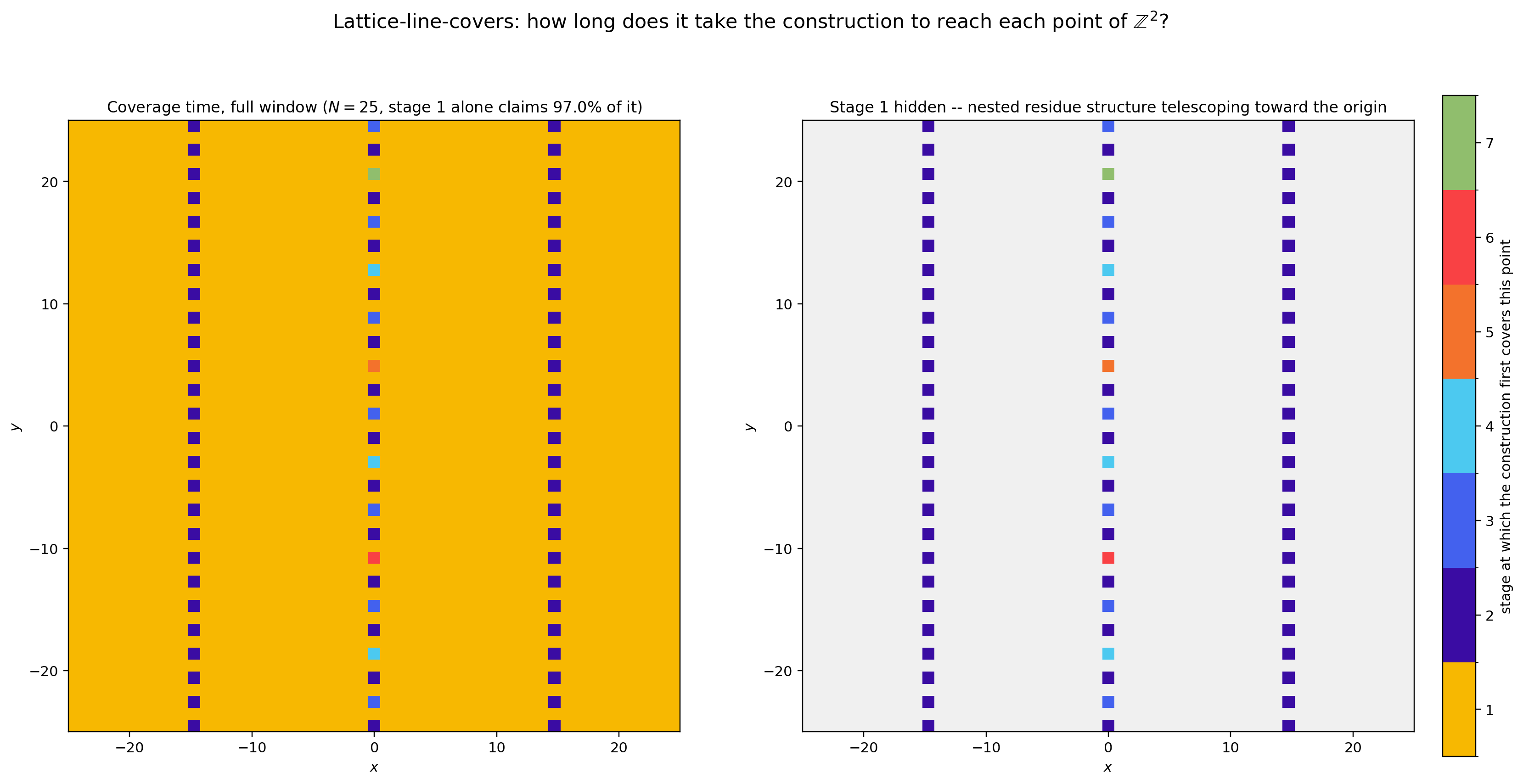}
  \caption{\(\mathrm{stage}(u,v)\) over a \(51\times51\) window (\(N=25\)) of the run behind
  Figure~\ref{fig:directions}. Left: the full window --- stage \(1\) alone (gold) already claims
  \(97.0\%\) of it, since the first direction excludes only a single residue class. Right: stage
  \(1\) hidden, revealing that of the three residual columns visible on the left, only the one
  nearest the origin keeps recursing (through stages \(3\)--\(7\), telescoping inward as \(|y|\)
  grows), while the other two are fully resolved by stage \(2\) alone and never revisited --- a
  direct consequence of \(z_k\) being enumerated by distance from the origin, so whichever residual
  region sits closest is exactly the one reopened next.}
  \label{fig:coveragetime}
\end{figure}

Stage \(k\) claims \(R_{k-1}\setminus R_k\) all at once, via every line \(\ell_{(P_k,Q_k),c}\) of
that stage's direction \((P_k,Q_k)\) needed to cover it (Lemma~\ref{lem:splitting-partition}) --- so
\(\mathrm{stage}\) conflates many distinct lines into one value. A finer invariant tracks the
individual line rather than the stage: enumerate the set \(S=\{\ell_{(P_k,Q_k),c}\}\) of lines
actually used by the construction as \(S=\bigcup_j\{s_j\}\), indexing each line \(s_j=(P,Q,c)\) the
first time the same \(L^\infty\) enumeration \((z_k)\) lands on one of its points, and set
\[
  \mathrm{hit}(u,v) \;=\; \min\{\,j : (u,v)\in\ell_{s_j}\,\}.
\]
Unlike \(\mathrm{stage}\), this has no direct role in the proof --- the construction only ever
needs the coarser, stage-level disjointness --- but it exposes structure the stage-level picture
cannot: at \(N=25\), \(887\) distinct lines are used, of which \(423\) meet the window in exactly
\(3\) points and \(270\) in exactly \(4\), so \(\mathrm{hit}\) is a genuinely fine-grained ordinal,
plotted in Figure~\ref{fig:linehittingtime} on a continuous scale rather than the discrete palette
of Figure~\ref{fig:coveragetime}.

\begin{figure}[htbp]
  \centering
  \makebox[\textwidth][c]{\includegraphics[width=1.32\textwidth]{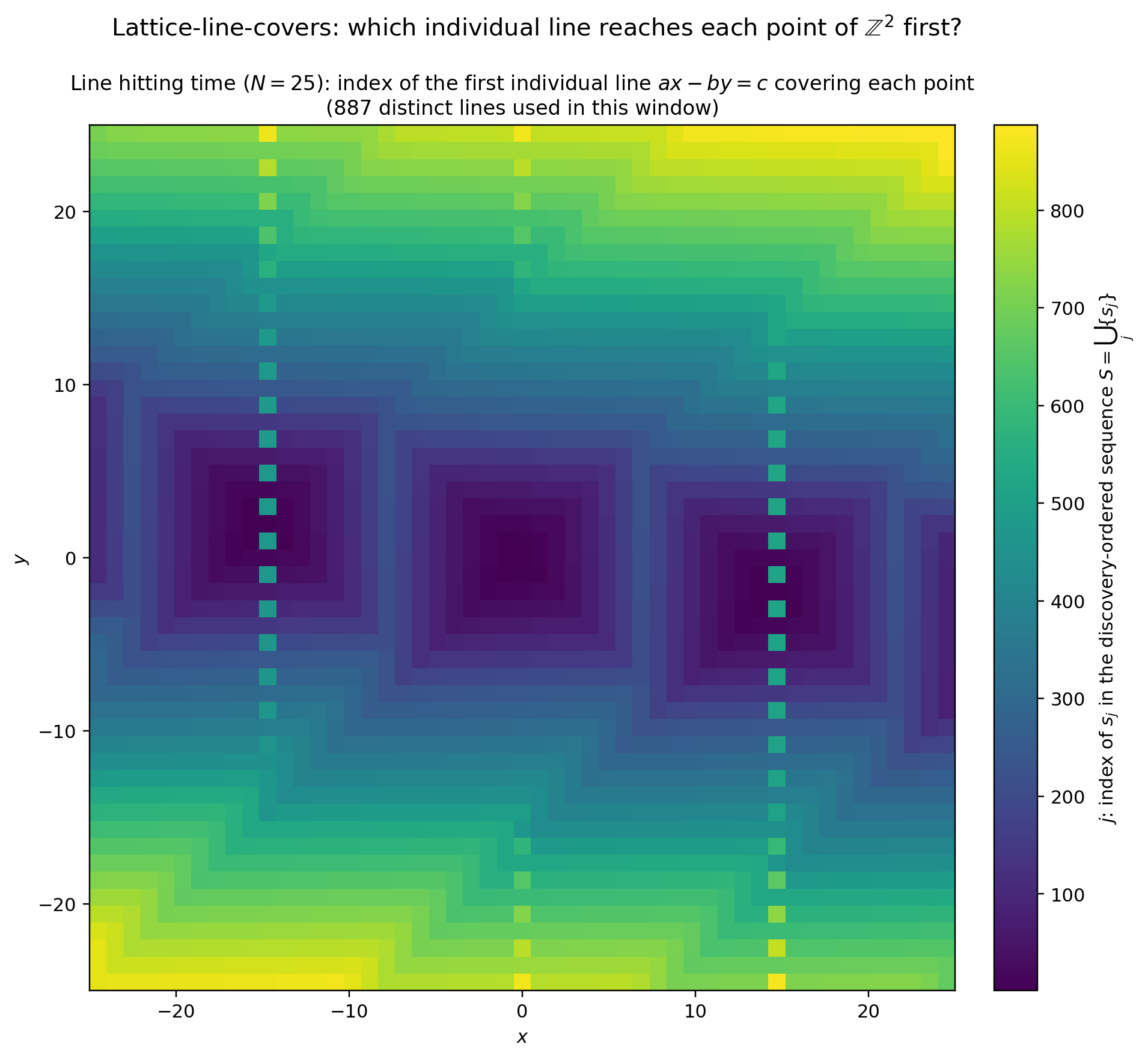}}
  \captionsetup{width=0.94\textwidth}
  \caption{\(\mathrm{hit}(u,v)\) over the same \(51\times51\) window. The concentric \emph{square}
  (not circular) rings are the \(L^\infty\) enumeration made visible. The three low-\(\mathrm{hit}\)
  blobs aligned at \(x=-15,0,15\) are a single mechanism repeated: stage \(1\)'s shallow direction
  \((15,-2)\) places three window-points of the \emph{same} line \(15\) apart in \(x\) (for
  instance \((0,2)\), \((-15,4)\), \((15,0)\) all lie on \(\ell_{(15,-2),-30}\)); since \(\mathrm{hit}\)
  is assigned at a line's first discovery and one representative always sits near the origin, its
  far echoes inherit the same tiny index for free, landing at nearly the same \(|y|\) because the
  direction is so shallow. The thin seam exactly at \(x=0\) is a different mechanism: those points
  are almost always the sole window-representative of a deep-recursion line (coefficients up to
  \(P\approx2.6\times10^6\) at \(N=25\)), so \(\mathrm{hit}\) there tracks \(|y|\) alone rather
  than \(\max(|x|,|y|)\), a slower growth law that reads as a seam cutting through the square
  rings.}
  \label{fig:linehittingtime}
\end{figure}

\section{Related work}

The reformulation via \(\varphi_d\) and the resulting rigidity phenomenon (Lemma~\ref{lem:rigidity})
form the natural two-dimensional analogue of the classical theory of \emph{covering systems of
congruences}, introduced by Erd\H{o}s~\cite{Erdos1950} to show that a positive proportion of the
integers are not of the form \(2^k+p\) for \(p\) prime. A very close relative of our covering problem
--- covering \(\Z^2\) by finitely many finite-index \emph{sublattices} (necessarily containing the
origin, unlike the general affine cosets considered here) --- is studied by Cremona and Koymans
\cite{CremonaKoymans2026}, who give it the same ``projective/homogeneous covering congruences''
framing; their refinement machinery (splitting one lattice into several of prime-power relative
index) is structurally reminiscent of the splitting used here, one rank up.

A different classical result addresses a related but distinct question about how many directions a
covering needs. Corzatt~\cite{Corzatt1985} conjectured that if a \emph{finite, convex} set of lattice
points is covered by \(n\) lines, with no constraint on where the lines may cross, the lines can
always be chosen to use at most four distinct slopes. Our setting is the opposite regime: an
infinite, unbounded point set, with a crossing-avoidance constraint standing in for a bound on the
number of lines, and correspondingly the answer is qualitatively different: densely many
directions are both necessary to consider and available.

The unimodular pairs forbidden by Lemma~\ref{lem:rigidity} are exactly the Farey-neighbor pairs of
slopes; see e.g.\ Hardy and Wright \cite[Ch.~III]{HardyWright2008} for the classical theory of
Farey sequences and the closely related Stern--Brocot tree. This is precisely the toolkit used in
the digital-geometry literature on digitized straight lines, where continued-fraction and
Farey-tree techniques describe digital approximations to a line of given (rational or irrational)
slope; see Kiselman~\cite{Kiselman2022}, and, for the continued-fraction approach specifically,
Uscka-Wehlou's dissertation~\cite{UsckaWehlou2009}.

\begin{openproblem}
  Is it possible to generalize Theorem~\ref{thm:main} to \(\Z^n\) for \(n\ge3\)? In this formulation,
  one would study configurations of ``lattice hyperplanes'' or ``lattice lines'', such that no two
  different directions meet at a lattice point, and determine which sets of directions can occur.
\end{openproblem}

\section*{AI disclosure}

The results in this manuscript --- the reformulation, the rigidity and coset lemmas, the
recursive construction, and both rounds of error-correction (one caught by numerical simulation,
one by an independent review) --- were found by Claude (Anthropic), working with the author in an
interactive session. This manuscript was likewise produced by the author and Claude jointly, with
the author mainly directing Claude toward clearer exposition for human readers: restructuring the
proofs into short, explicitly labeled lemmas, and breaking up long equations for readability. The
author then checked the final manuscript to the best of his ability, made final edits, and assumes
full responsibility for any errors, mistakes, or gaps that remain.

\section*{Description of the work process}

Both the splitting mechanism (Lemmas~\ref{lem:splitting-coset}--\ref{lem:splitting-partition}) and
the sign-generality of Lemma~\ref{lem:steering} were checked by direct brute-force simulation
(exhaustive crossing and coverage checks on finite windows up to \(81\times81\), including
adversarial diagonal and sign-alternating schedules) before being trusted; a splitting step
without the primitivity safeguard built into Lemma~\ref{lem:splitting-coset} was found this way to
be genuinely flawed rather than merely under-justified. The full argument was also read
independently by a second reader (another Claude agent) working from the statement of the lemmas
alone, who found a real gap: restricting \(s,t\) to be positive only reaches half of \(\RP\), and
existence of \emph{some} admissible move at each stage does not imply the ability to steer toward
an \emph{arbitrary} target --- both addressed in the versions of
Lemmas~\ref{lem:splitting-coset}--\ref{lem:splitting-partition} and~\ref{lem:steering} given here.
A machine-checked (Lean/Mathlib) formalization of this argument has also been completed, as a
separate, complementary effort, available at~\cite{gitrepo}; brute-force simulation and
independent reading catch different classes of error than a machine-checked proof does, and vice
versa.

The additional references collected in the repository's extended-reference list \cite{gitrepo}
were located and verified by Claude via web search. A list of ``certificates'' showing how each
reference is used in this manuscript (this ``certificate of non-hallucination''), together with a
more detailed explanation of the whole process of producing this manuscript, from inception to
exploration to proof discovery to proof formalization and verification, are described in the
gitlab repository \cite{gitrepo}; the formalization and verification code is also archived, with a
citable DOI, on Zenodo \cite{zenodo}.

\section*{Acknowledgement}

This problem occurred to the author during an idle conversation with colleagues
F.B.\ and T.E.\ (in the department coffee room) about covering the lattice \(\Z^2\) with lines.
It was considered as a useful test for LLM-assisted proof discovery and proof
formalization, since the author conjectured that the proof of the result,
if it was indeed true, should only use elementary methods, which are already in
lean (Mathlib).

Figure~\ref{fig:discmeasure} was inspired by a remark --- certainly misunderstood by the author ---
about a relation to Besicovitch--Orponen theory, made by his colleague B.E.; the keyword
``measure'' stuck.

\section*{Funding}

This research received no external funding. It was carried out as part of the author's regular
duties at Link\"oping University, within the fraction of that employment allocated to research, the
remainder being teaching and other duties.

\section*{Conflicts of interest}

The author declares no financial or non-financial conflicts of interest. The nature and extent of
LLM assistance in producing this manuscript is disclosed in full in the AI disclosure and
Description of the work process sections above.

\section*{Notation}

Notation used across more than one later result, in order of first appearance; symbols local to a
single proof are omitted.

\begin{center}
\begin{tabular}{@{}p{0.24\textwidth}p{0.48\textwidth}p{0.19\textwidth}@{}}
\hline
\textbf{Symbol} & \textbf{Meaning} & \textbf{Defined in} \\
\hline
\(\RP\) & space of line directions, \(\theta\in[0,\pi)\) & \S1, after Def.~\ref{def:direction} \\
\(d=(p,q)\) & primitive direction, \(\gcd(p,q)=1\) & Def.~\ref{def:direction} \\
\(\varphi_d(x,y)=qx-py\) & direction homomorphism \(\Z^2\to\Z\) & Def.~\ref{def:direction},
  \eqref{eq:phi-def} \\
\(\ell_{d,c}\) & lattice line, \(\varphi_d^{-1}(c)\) & Def.~\ref{def:lline}, \eqref{eq:lline-def} \\
\(\mathcal F\) & covering family & Def.~\ref{def:family} \\
\(\Delta=p_1q_2-p_2q_1\) & determinant of two directions & Lemma~\ref{lem:rigidity},
  \eqref{eq:Delta-def} \\
\(R(n_1,n_2,x_0,y_0)\) & rank-2 coset in \(\Z^2\) & Def.~\ref{def:Rcoset}, \eqref{eq:Rcoset-def} \\
\(R_{u_0,w_0}\) & sub-coset of \(R\) & \eqref{eq:Rsub-def} \\
\((P,Q)\), \((P_k,Q_k)\) & achieved direction of a splitting stage & Lemma~\ref{lem:splitting-coset},
  \eqref{eq:PQ-def} \\
\(\omega(n_1)\) & number of distinct prime divisors of \(n_1\) & \S3, proof of
  Lemma~\ref{lem:steering} \\
\(\mathrm{stage}(u,v)\) & the stage \(k\) that first claims \((u,v)\) & \S4 \\
\(\mathrm{hit}(u,v)\) & index of the first individual line covering \((u,v)\) & \S4 \\
\hline
\end{tabular}
\end{center}


\begin{thebibliography}{9}

\bibitem{CremonaKoymans2026}
J.~E. Cremona and P. Koymans, \emph{Lattice Coverings and Homogeneous Covering Congruences},
arXiv:2601.03212v2 (2026).

\bibitem{Corzatt1985}
C.~E. Corzatt, \emph{Covering convex sets of lattice points with straight lines}, Congressus
Numerantium \textbf{50} (1985), 129--135.

\bibitem{Erdos1950}
P. Erd\H{o}s, \emph{On integers of the form \(2^k+p\) and some related problems}, Summa Brasil.
Math. \textbf{2} (1950), 113--123.

\bibitem{HardyWright2008}
G.~H. Hardy and E.~M. Wright, \emph{An Introduction to the Theory of Numbers}, 6th ed., revised
by D.~R. Heath-Brown and J.~H. Silverman, Oxford University Press, Oxford, 2008.

\bibitem{Kiselman2022}
C.~O. Kiselman, \emph{Elements of Digital Geometry, Mathematical Morphology, and Discrete
Optimization}, World Scientific, Singapore, 2022.

\bibitem{UsckaWehlou2009}
H. Uscka-Wehlou, \emph{Digital Lines, Sturmian Words, and Continued Fractions}, Ph.D. thesis,
Uppsala Dissertations in Mathematics \textbf{65}, Uppsala University, 2009.

\bibitem{Eisenbud1995}
D. Eisenbud, \emph{Commutative Algebra with a View Toward Algebraic Geometry}, Graduate Texts in
Mathematics \textbf{150}, Springer-Verlag, New York, 1995.

\bibitem{Robbiano1985}
L. Robbiano, \emph{Term orderings on the polynomial ring}, in: Proceedings of EUROCAL '85, Vol.~2
(Linz, 1985), Lecture Notes in Computer Science \textbf{204}, Springer, Berlin, 1985, pp.~513--517.

\bibitem{gitrepo}
J. Snellman and Claude (Anthropic), \emph{lattice-line-covers} (source repository, including the
Lean/Mathlib formalization and reference certificates), 2026,
\url{https://gitlab.liu.se/jansn19/lattice-line-covers}.

\bibitem{report2026}
J. Snellman, \emph{On the directions occurring in lattice-line coverings of the integer plane},
LiTH-MAT-R, ISSN 0348-2960, No. 2026:02, Link\"oping University Electronic Press, 2026.
\doi{10.3384/LiTH-MAT-R-2026-02}. Open access via DiVA, \texttt{urn:nbn:se:liu:diva-226766}.

\bibitem{zenodo}
J. Snellman, \emph{lattice-line-covers: Lean/Mathlib formalization and verification code}, Zenodo,
Link\"oping University community, 2026. Concept DOI (all versions) \doi{10.5281/zenodo.21916621};
this snapshot \doi{10.5281/zenodo.21916622}.

\end{thebibliography}
\end{document}